\newcommand{\cal}[1]{\mathscr{#1}}
\newcommand{\fk}[1]{\mathfrak{#1}}
\newcommand{\GK}{\mathrm{GKdim}}
\newcommand{\bC}{\mathbb{C}}
\newcommand{\bZ}{\mathbb{Z}}
\newcommand{\bD}{\mathbb{D}}
\newcommand{\gr}{\mathrm{gr}}
\newcommand{\Spec}{\mathrm{Spec}}
\newcommand{\Ext}{\mathrm{Ext}}
\newcommand{\Hom}{\mathrm{Hom}}
\newcommand{\RHom}{\mathrm{RHom}}
\newtheorem{thm}{Theorem}[section]
\newtheorem*{thm*}{Theorem}
\newtheorem{cor}[thm]{Corollary}
\newtheorem{lem}[thm]{Lemma}
\newtheorem{prop}[thm]{Proposition}
\theoremstyle{definition}
\newtheorem{defn}[thm]{Definition}
\newtheorem{rmk}[thm]{Remark}
\newtheorem{eg}[thm]{Example}
\numberwithin{equation}{section}
\title[Holonomic Cherednik modules, I]{Holonomic modules over Cherednik algebras, I}
\author{Daniel Thompson}
\email{dthomp@math.mit.edu}
\date{\today}
\begin{document}
\begin{abstract}
The goal of this paper is to generalize several basic results from the theory of $\cal{D}$-modules to the representation theory of rational Cherednik algebras.  We relate characterizations of holonomic modules in terms of singular support and Gelfand-Kirillov dimension.  We study pullback, pushforward, and dual on the derived category of (holonomic) Cherednik modules for certain classes of maps between varieties.  We prove, in the case of generic parameters for the rational Cherednik algebra, that pushforward with respect to an open affine inclusion preserves holonomicity.
\end{abstract}
\maketitle
\section{Introduction}
The rational Cherednik algebra $H_c$ of a finite subgroup $W$ of $GL_n(\bC)$ is a universal deformation of the skew-product algebra $\cal{D}(\bC^n) \rtimes \bC W$ of the Weyl algebra (algebra of polynomial differential operators on $\bC^n$) with the group algebra of $W$ (see \cite[Theorem 2.13]{E04}).  As such, it is a family of algebras over a space of parameters $c$, and in particular when we specialize the parameters to the numerical value $c=0$ we just obtain $H_0 = \cal{D}(\bC^n) \rtimes \bC W$.  Thus, we may view the representation theory of $H_c$ as a deformation of the theory of $W$-equivariant $\cal{D}$-modules on affine space.

The foundational paper \cite{GGOR} studies a special category of representations of $H_c$ which deforms the category of \emph{$\cal{O}$-coherent, graded} $W$-equivariant $\cal{D}$-modules.  For parameters $c$ outside a countable collection of hyperlanes, this category is identical to the category of representations of $W$.  In particular, the same is true of this category if $c=0$.

Understanding the general representation theory of $H_c$ remains a very difficult problem.  I. Losev \cite{L} has introduced the notion of \emph{holonomic} representations of certain algebras, including rational Cherednik algebras, and it is the category of such representations that we study in the present paper.  For generic parameters, we expect the theory of holonomic representations of $H_c$ to be similar to the theory of $W$-equivariant holonomic $\cal{D}$-modules on $\bC^n$ (the $c=0$ case).  This expectation comes despite the fact that the Morita equivalence class of $H_c$ may depend of $c$ even for generic $c$: see \cite{BEG04} which classifies $H_c$ up to Morita equivalence for $W$ equal to the symmetric group.  On the other hand, for special parameters, the presence of $\cal{O}$-coherent modules with less than full support as in \cite{GGOR} already shows that the category of holonomic Cherednik modules is more complicated than the theory of $W$-equivariant holonomic $\cal{D}$-modules.

More generally, one can study $\cal{D}$-modules on smooth varieties.  P. Etingof \cite{E04} has defined a sheaf of Cherednik algebras on a smooth variety with the action of a finite group.  The parameter space is more complicated (in particular, there is a global ``twisting'' parameter), but again by \cite[Theorem 2.13]{E04} this sheaf of algebras is a universal deformation of the skew-product of the sheaf of differential operators on the variety with the group algebra.

The goal of this paper is to generalize several basic results from the theory of $\cal{D}$-modules to the current setting.  The structure of the paper is as follows.  After introducing the basic objects of study in section 2, we investigate in section 3 characterizations of holonomic modules in terms of singular support and Gelfand-Kirillov dimension.  In section 4 we discuss the global setting of Cherednik modules on a variety, and we introduce pullback and pushforward for certain equivariant maps between varieties.  We also introduce the Verdier dual functor on the derived category, and discuss an analog of Kashiwara's theorem.  In section 5 we prove, in the case of generic parameters for the rational Cherednik algebra, that pushforward with respect to open affine inclusions takes holonomic modules to holonomic modules over the rational Cherednik algebra.

\subsection*{Acknowledgements}
The author would like to thank Pavel Etingof and Gwyn Bellamy for reading earlier drafts of this paper and for discussions along the way.  This material is based upon work supported by the National Science Foundation Graduate Research Fellowship Program under Grant No. 1122374.

\section{Setup}
\subsection{Rational Cherednik algebra}
Let $\fk{h}$ be a complex vector space of dimension $r$, and let $W$ be a finite group acting linearly on $\fk{h}$ (we do not require the action to be faithful).  We say an element of $s\in W$, $s\neq 1$, is a \emph{reflection} if it fixes pointwise a codimension 1 hyperplane.  Let $\cal{S}\subset W$ be the set of reflections, and choose a function $c:\cal{S}\to \bC$ which is constant on $W$-conjugacy classes in $\cal{S}$.  Finally, for $s\in \cal{S}$ choose eigenvectors $\alpha_s\in \fk{h}^*$ and $\alpha_s^\vee \in \fk{h}$ for the action of $s$, both with eigenvalue different from 1, and normalized so that $\langle \alpha_s,\alpha_s^\vee\rangle=2$ where $\langle \cdot, \cdot \rangle$ is the natural pairing between $\fk{h}^*$ and $\fk{h}$.  We recall, \cite{EM}, the definiton of the rational Cherednik algebra $H_c(W,\fk{h}) := H_{1,c}(W,\fk{h})$.  It is defined to be the quotient of the smash-product algebra $T(\fk{h}\oplus \fk{h}^*) \rtimes \bC W$ (here $T(V)$ denotes the tensor algebra of $V$) by relations of the following form:
$$[x,x']=0, \quad [y,y']=0, \quad [y,x] = \langle x, y \rangle - \sum_{s\in \cal{S}} c(s) \langle \alpha_s,y\rangle \langle x,\alpha_s^\vee\rangle s,$$
where $x,x'\in \fk{h}^*, y,y'\in \fk{h}$.
When there is no confusion about $W,\fk{h}$ we will just write $H_c$.

We will assume the parameter $c$ is arbitrary unless otherwise specified.

\subsection{Filtrations and PBW theorem}
Let us now introduce two filtrations on $H_c$.  Since the algebra is generated by $\fk{h}^*, \fk{h},$ and $\bC W$, it will suffice to specify the degrees of the generators to define these filtrations.  The \emph{geometric filtration} is given by putting $\deg(\fk{h}^*)=0$ and $\deg(\fk{h})=1$.  The \emph{Bernstein filtration} is given by putting $\deg(\fk{h}^*)=\deg(\fk{h})=1$.  In both filtrations we set $\deg(\bC W) = 0$.

It is clear that, with respect to either of these filtrations, we have a natural surjective homomorphism
$$\bC[\fk{h}\oplus \fk{h}^*] \rtimes \bC W \to \gr H_c.$$
In fact, these maps are both isomorphisms.  This claim is equivalent to the standard fact that the algebra $H_c$ has a basis given by the set of elements of the form
$$g \prod_{i=1}^r y_i^{m_i} \prod_{i=1}^r x_i^{n_i}$$ 
where $y_i$ is a basis of $\fk{h}$, $x_i$ is a basis of $\fk{h}^*$, and $g$ is an element of $W$.

\subsection{GK dimension}
We recall the definition of Gelfand-Kirillov dimension of a module.  For proofs of standard facts about good filtrations we refer the reader to \cite[Appendix D.1]{HTT}.  Suppose $M$ is a finitely generated $H_c$-module.  Then with respect to either filtration on $H_c$, there exists a \emph{good filtration} on $M$, that is, an ascending filtration $M=\bigcup_{j=0}^\infty M_j$ such that $\gr M$ is finitely generated over $\gr H_c$.

Now consider the Bernstein filtration on $H_c$.  There is a polynomial $h_M(j)$ which agrees with $\dim M_j$ for all sufficiently large $j$.  The \emph{Gelfand-Kirillov dimension} is defined to be the degree of $h_M(j)$.  It does not depend on the choice of filtration (though the polynomial $h_M(j)$ in general does).

We write $\GK(M)$ for the GK dimension of $M$.

\subsection{Singular support and holonomic modules}
Losev \cite[Section 1.4]{L} gives a definition of holonomicity of a finitely generated $H_c$-module $M$, which ostensibly depends on the filtration chosen on $H_c$.  Our first results will show that his definition yields the same notion of holonomicity if we take either the Bernstein or geometric filtration.

Let $Z(A)$ denote the center of an algebra $A$.  Note first that $Z(\gr H_c) = \bC[\fk{h}\oplus \fk{h}^*]^W$, and so $\Spec\, Z(\gr H_c) = (\fk{h}\oplus \fk{h}^*)/W$ is a Poisson variety with finitely many symplectic leaves, each of which has finite fundamental group.  More specifically, the leaves of $(\fk{h}\oplus \fk{h}^*)/W$ are in one to one correspondence with the conjugacy classes of parabolic subgroups of $W$.  Recall that a subgroup $W'\subset W$ is parabolic (with respect to the action on $\fk{h}$) if it is the stabilizer of a point of $\fk{h}$.  For a parabolic subgroup $W'\subset W$, the corresponding leaf $\cal{L}_{W'}$ is $(\fk{h}\oplus \fk{h}^*)^{W', W'-\mathrm{reg}} / N(W')$, where $N(W')$ is the normalizer of $W'$ in $W$, and where $$(\fk{h}\oplus \fk{h}^*)^{W', W'-\mathrm{reg}} = \{v\in \fk{h}\oplus \fk{h}^* | \mathrm{Stab}_{W}(v) = W'\}.$$  Now $(\fk{h}\oplus \fk{h}^*)^{W', W'-\mathrm{reg}}$ is simply connected, as it is the complement in the complex vector space $(\fk{h}\oplus \fk{h}^*)^{W'}$ of subspaces of codimension at least 2, so we have $\pi_1(\cal{L}_{W'}) = N(W')/W'$, a finite group.

Now let us recall the definition of holonomicity.  Equip $M$ with a good filtration, and define the singular support to be the set-theoretic support in $(\fk{h}\oplus \fk{h}^*)/W$ of $\gr M$, thought of as a module over $Z(\gr H_c)$.  This variety does not depend on the choice of good filtration.  If we take the Bernstein (resp., geometric) filtration on $H_c$, we obtain the \emph{arithmetic} (resp., \emph{geometric}) singular support $V_a(M)$ (resp., $V(M)$).  We say $M$ is \emph{holonomic} with respect to the Bernstein (resp., geometric) filtration if the smooth locus of $V_a(M)\cap \cal{L}$ (resp., $V(M)\cap \cal{L}$)  is isotropic in every symplectic leaf $\cal{L}$ of $(\fk{h}\oplus \fk{h}^*)/W$.  We will see in the next section that being holonomic with respect to either filtration on $H_c$ is the same condition, so we say $M$ is \emph{holonomic} if it is holonomic with respect to either.

The following proposition shows, in particular, that $\dim V_a(M) = \dim V(M)$.  Note that $\Ext^i_{H_c}(M,H_c)$ is a finitely generated right $H_c$-module because $H_c$ is left and right Noetherian.

\begin{prop} \label{prop:exts} Let $M$ be a nonzero, finitely generated $H_c$-module.  Let $d(M)$ be the dimension of the arithmetic or geometric singular support of $M$ in $(\fk{h}\oplus \fk{h}^*)/W$, and let $j(M)=\min\{i\ |\ \Ext^i_{H_c}(M,H_c)\neq 0\}$.  Then we have
  \begin{enumerate}
  \item $d(M) = 2r - j(M)$,
  \item $d(\Ext^i_{H_c}(M,H_c)) \leq 2r-i$ for all $i\in \bZ$,
  \item $d(\Ext^{j(M)}_{H_c}(M,H_c))=d(M)$.
  \end{enumerate}
\end{prop}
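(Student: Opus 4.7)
The plan is to show that $H_c$ is Auslander--Gorenstein of injective dimension $2r$ and Cohen--Macaulay with respect to the dimension function $d$; all three assertions then follow from this package. Indeed (1) is the Cohen--Macaulay equation $d(M) + j(M) = 2r$; (2) combines the Auslander condition $j(\Ext^i_{H_c}(M,H_c)) \geq i$ with (1); and (3) is the resulting purity statement, which follows formally by combining (1), (2), and biduality for Auslander--Gorenstein rings.

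Both filtrations on $H_c$ have associated graded equal to $A := \bC[\fk{h}\oplus \fk{h}^*]\rtimes \bC W$, so I would first verify the two properties for $A$. The polynomial ring $R := \bC[\fk{h}\oplus \fk{h}^*]$ is regular of Krull dimension $2r$, so every finitely generated $R$-module $N$ satisfies $j_R(N) + \dim\mathrm{supp}(N) = 2r$ (by Auslander--Buchsbaum together with the dimension formula $\mathrm{ht}(I)+\dim(R/I) = 2r$ for ideals of a polynomial ring). Since $|W|$ is invertible in $\bC$, restriction from $A$-modules to $R$-modules is exact and preserves projectives; the standard Shapiro-type argument using $A\cong R\otimes \bC W$ as a left $R$-module then gives $j_A(M) = j_R(M|_R)$, and the dimension of support over $Z(A) = R^W$ equals that of $M|_R$ over $R$ since $R$ is a finite extension of $R^W$. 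Thus the Auslander--Gorenstein and Cohen--Macaulay properties lift from $R$ to $A$.

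Finally I would transfer from $A$ to $H_c$ via the standard filtered-to-graded machinery. Given a good filtration on $M$, choose a filtered free resolution $P^\bullet \to M$ over $H_c$ such that $\gr P^\bullet \to \gr M$ is a graded free resolution over $A$; applying $\Hom_{H_c}(-,H_c)$ yields a filtered complex whose associated graded is $\Hom_A(\gr P^\bullet, A)$, producing a spectral sequence converging to $\gr \Ext^i_{H_c}(M,H_c)$ with $E_1$-page $\Ext^i_A(\gr M, A)$. The main obstacle is leveraging this comparison: the spectral sequence directly yields only the inequalities $d(\Ext^i_{H_c}(M,H_c)) \leq d(\Ext^i_A(\gr M, A)) \leq 2r - i$ and $j(M) \geq j(\gr M)$, so to establish the equalities in (1) and (3) one must apply the Auslander condition on $A$ to subquotients of Ext modules and iterate. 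This step is standard in the theory of filtered Noetherian algebras (see Bj\"ork's treatment of the Weyl algebra and Levasseur's analogous results for enveloping algebras) and carries over to the present setting once the properties of $A$ are in hand.
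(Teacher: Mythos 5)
Your proposal is correct and follows essentially the same route as the paper: the paper's proof simply invokes the standard filtered-to-graded argument of \cite[Theorem D.4.3]{HTT} (the Auslander--Gorenstein/Cohen--Macaulay package you describe), justified by the observation that $\Ext^i$ over $\bC[\fk{h}\oplus\fk{h}^*]\rtimes\bC W$ agrees with $\Ext^i$ over $\bC[\fk{h}\oplus\fk{h}^*]$, which is exactly your Shapiro-type reduction using that $|W|$ is invertible. You have merely written out in detail the steps the paper leaves to the reference.
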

\begin{proof} The usual proof applies (see \cite[Theorem D.4.3]{HTT}) because $$\Ext^i_{\bC[\fk{h}\oplus \fk{h}^*]\rtimes \bC W}(N,\bC[\fk{h}\oplus \fk{h}^*]\rtimes \bC W) \simeq \Ext^i_{\bC[\fk{h}\oplus \fk{h}^*]}(N,\bC[\fk{h}\oplus \fk{h}^*]).$$
\end{proof}

\subsection{Losev's results}
Because $(\fk{h}\oplus \fk{h}^*)/W$ has finitely many symplectic leaves, each of which has finite fundamental group, Losev's Theorem 1.3 of \cite{L} ensures that $H_c$ is finite length as a bimodule over itself.  This allows us to state his main results for Cherednik modules as follows.

First, there is an analogue of Bernstein's inequality:
\begin{thm}[Theorem 1.1 of \cite{L}] \label{thm:bernst}
  Let $M$ be a nonzero, finitely generated $H_c$-module with annihilator $I$.
  \begin{enumerate}
  \item We have $2 \dim V(M) \geq \dim V(H_c/I)$.
  \end{enumerate}
  If we assume further that $M$ is simple, then
  \begin{itemize}
  \item[(2)] if $\cal{L}$ is a symplectic leaf of $(\fk{h}\oplus \fk{h}^*)/W$ which contains a nonempty open subset of $V(M)$, then $V(H_c/I) = \overline{\cal{L}}$ and $V(M) \cap \cal{L}$ is coisotropic in $\cal{L}$, and
  \item[(3)] every irreducible component of $V(M)$ intersects nontrivially with $\cal{L}$.
  \end{itemize}
\end{thm}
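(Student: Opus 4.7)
The plan is to adapt the classical strategy behind Gabber's involutivity theorem to the stratified Poisson setting of $(\fk{h}\oplus\fk{h}^*)/W$. The first step is to establish that $\sqrt{\gr I}$ cuts out a Poisson subvariety of $\Spec Z(\gr H_c)$: given $a\in I$ of Bernstein degree $i$ and $b\in H_c$ of degree $j$, the commutator $[a,b]$ lies in $I$ and has Bernstein degree at most $i+j-2$, so its image in $\gr H_c$ realizes the Poisson bracket on associated gradeds. Consequently $V(H_c/I)$ is a union of closures of symplectic leaves of $(\fk{h}\oplus\fk{h}^*)/W$.

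The central technical tool I would use is a restriction functor from finitely generated $H_c$-modules to modules over a \emph{slice quantization}. For a point $b$ in a leaf $\cal{L}=\cal{L}_{W'}$, I complete $H_c$ at the maximal ideal of the image of $b$ in $Z(\gr H_c)$, working with the Rees algebra so that the Bernstein filtration survives as a flat $\bC[\hbar]$-family. By the Bezrukavnikov--Etingof completion theorem (playing the role of formal Darboux here), this completion is identified with a completed tensor product of a Weyl algebra on the formal neighborhood of $b$ in $\cal{L}$ and a Cherednik algebra for the parabolic $(W',\fk{h}/\fk{h}^{W'})$. The induced restriction functor is exact, and translates singular support into the Weyl-algebra factor: the singular support of the restriction of $M$ is a formal neighborhood of $V(M)\cap\cal{L}$.

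With the restriction in hand, the three assertions reduce to classical statements about the Weyl algebra. For (1), choose a leaf $\cal{L}$ of top dimension in $V(H_c/I)$; it must meet $V(M)$ because $M$ is faithful over $H_c/I$. The classical Bernstein inequality applied to the Weyl-algebra factor gives $\dim(V(M)\cap\cal{L})\geq \frac{1}{2}\dim\cal{L}=\frac{1}{2}\dim V(H_c/I)$. For (2), simplicity of $M$ makes $I$ primitive, and the finite bimodule length of $H_c$ (\cite[Theorem 1.3]{L}) forces $V(H_c/I)$ to be a single leaf closure $\overline{\cal{L}}$; coisotropy of $V(M)\cap\cal{L}$ in $\cal{L}$ is then Gabber's involutivity theorem for the Weyl factor. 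Assertion (3) follows from equidimensionality of $V(M)$ for simple $M$ (extracted from Proposition \ref{prop:exts} applied to the Cohen--Macaulay module $\Ext^{j(M)}_{H_c}(M,H_c)$), combined with $V(M)\subseteq\overline{\cal{L}}$ and the fact that $\overline{\cal{L}}\setminus\cal{L}$ has strictly smaller dimension.

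The principal obstacle is the construction of the restriction functor so that it is compatible with the Bernstein filtration in a way that tracks singular supports. Completion at a closed point destroys gradings, so one must work at the $\hbar$-adic level via the Rees algebra and verify that the slice decomposition is simultaneously $\hbar$-adically and filtration-compatible. This is exactly the analytic core of Losev's argument via symplectic cores and slice quantizations, and is where the bulk of the work lies.
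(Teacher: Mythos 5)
This statement is not proved in the paper at all: it is Theorem~1.1 of Losev's work, quoted verbatim, and the only supporting argument the paper supplies is the remark immediately preceding it, verifying that $H_c$ satisfies Losev's hypotheses (namely that $(\fk{h}\oplus\fk{h}^*)/W$ has finitely many symplectic leaves with finite fundamental groups, so that $H_c$ has finite length as a bimodule over itself by Losev's Theorem~1.3). Your proposal is therefore a reconstruction of Losev's proof rather than of anything appearing here. As a reconstruction it gets the architecture right --- Poisson-ness of $\sqrt{\gr I}$, restriction to slice algebras via Rees algebras and the Bezrukavnikov--Etingof isomorphism, reduction to Gabber/Bernstein for the Weyl factor --- and you correctly identify the filtration-compatible restriction functor as the technical heart.

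Two steps you present as routine reductions are, however, genuine gaps. For (1), the assertion that the top-dimensional leaf of $V(H_c/I)$ ``must meet $V(M)$ because $M$ is faithful over $H_c/I$'' does not follow from faithfulness in any direct way: completing at a point $b\notin V(M)$ kills $M$ without killing $H_c/I$, since annihilators are not preserved under restriction to slices. Showing that $V(H_c/I)$ is controlled by the leaves that actually meet $V(M)$ is precisely where the finite bimodule length of $H_c$ and the Harish--Chandra bimodule formalism enter Losev's argument, and you invoke that hypothesis only in (2). Moreover, even granting a point $b\in V(M)\cap\cal{L}$, the slice restriction functor produces a module over the transverse factor, not a finitely generated module over the Weyl factor whose support computes $V(M)\cap\cal{L}$, so the appeal to the classical Bernstein inequality needs a further device. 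For (3), equidimensionality of $V(M)$ for simple $M$ is not available at this stage: Proposition~\ref{prop:exts}(3) only identifies the top dimension of $V(M)$ with that of $V(\Ext^{j(M)}_{H_c}(M,H_c))$, and the paper asserts equidimensionality (Theorem~\ref{thm:loshol}(2)) only for simple \emph{holonomic} modules, as a consequence of the present theorem rather than an input to it; and even granting it, a component of dimension $\tfrac{1}{2}\dim\cal{L}$ could on dimension grounds lie entirely inside $\overline{\cal{L}}\setminus\cal{L}$. Note that (3) is in fact a formal consequence of (2): the generic point of each irreducible component of $V(M)$ lies in a single leaf $\cal{L}'$, which therefore contains a nonempty open subset of $V(M)$, so (2) forces $\overline{\cal{L}'}=V(H_c/I)=\overline{\cal{L}}$ and hence $\cal{L}'=\cal{L}$.
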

A simple corollary of the latter part is
\begin{cor} \label{cor:los} If $M$ is simple, then the only leaf $\cal{L}$ which contains a nonempty open subset of $V(M)$ as in (2) is the unique maximal leaf of $V(H_c/I)$. Also, $V(M)$ is the closure of its intersection with this $\cal{L}$. \end{cor}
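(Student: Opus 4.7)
The plan is to derive the corollary directly from parts (2) and (3) of Theorem~\ref{thm:bernst}, together with the standard fact that in a Poisson variety with finitely many symplectic leaves, each leaf is locally closed (so $\cal{L}$ is open in $\overline{\cal{L}}$) and, being smooth and connected, is irreducible. Consequently $\overline{\cal{L}}$ is irreducible and $\cal{L}$ is its unique open dense leaf.

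For the uniqueness claim I would first note that existence of a leaf containing a nonempty open subset of $V(M)$ is automatic: $V(M)$ is partitioned into the finitely many locally closed pieces $V(M)\cap \cal{L}_\alpha$, so at least one of these is dense in some irreducible component $Z$ of $V(M)$ and hence (being dense locally closed) open in $Z$; after removing the other components it becomes open in $V(M)$. Now suppose two leaves $\cal{L}$ and $\cal{L}'$ both contain nonempty open subsets of $V(M)$. Part (2) gives $V(H_c/I) = \overline{\cal{L}} = \overline{\cal{L}'}$. By the opening remark, both $\cal{L}$ and $\cal{L}'$ are open and dense in the irreducible variety $V(H_c/I)$; two such subsets of an irreducible space must meet, and distinct leaves are disjoint, so $\cal{L}=\cal{L}'$. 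This identifies $\cal{L}$ as the unique maximal leaf of $V(H_c/I)$.

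For the second assertion, observe first that $V(M)\subseteq V(H_c/I)=\overline{\cal{L}}$: for any good filtration, $\gr M$ is a module over $\gr H_c/\gr I = \gr(H_c/I)$, since $I$ annihilates $M$. Let $Z$ be an irreducible component of $V(M)$. By (3), $Z\cap \cal{L}$ is nonempty; since $Z\subseteq \overline{\cal{L}}$ and $\cal{L}$ is open in $\overline{\cal{L}}$, the set $Z\cap \cal{L}$ is open in $Z$. A nonempty open subset of an irreducible variety is dense, so $Z=\overline{Z\cap \cal{L}}$. Taking the union over the finitely many components yields $V(M)=\overline{V(M)\cap \cal{L}}$.

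I do not expect a genuine obstacle: the substantive work is already packaged in Theorem~\ref{thm:bernst}, and the corollary is essentially bookkeeping. The step most worth stating carefully is the uniqueness argument, where one uses that a symplectic leaf of $(\fk{h}\oplus \fk{h}^*)/W$ is the unique open dense leaf of its closure --- this relies on both local closedness (from having only finitely many leaves) and smoothness/connectedness (hence irreducibility) of each leaf.
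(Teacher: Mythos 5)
Your argument is correct and is exactly the derivation the paper intends: the corollary is stated without proof as an immediate consequence of parts (2) and (3) of Theorem~\ref{thm:bernst}, and your bookkeeping (leaves are locally closed and irreducible, hence each is the unique open dense leaf of its closure; uniqueness via disjointness of leaves; density of $Z\cap\cal{L}$ in each component $Z$ via (3)) fills in precisely the omitted details. No gaps.
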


More is true if $M$ is holonomic.
\begin{thm}[Theorem 1.2 of \cite{L}] \label{thm:loshol}
  Let $M$ be a holonomic $H_c$-module with annihilator $I$.
  \begin{enumerate}
  \item $M$ has finite length, and we have $2 \dim V(M) = \dim V(H_c/I)$.
  \item Furthermore, if $M$ is simple then $V(M)$ is equidimensional.
  \end{enumerate}
\end{thm}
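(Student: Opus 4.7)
The plan is to deduce both parts of Theorem \ref{thm:loshol} from Theorem \ref{thm:bernst} and Corollary \ref{cor:los}, together with a multiplicity argument for the finite length claim.

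I would begin with part (2) together with the dimension equality in part (1) in the special case that $M$ is simple holonomic. By Corollary \ref{cor:los}, there is a unique maximal leaf $\cal{L}\subset V(H_c/I)$ such that $V(M) = \overline{V(M)\cap \cal{L}}$, and by Theorem \ref{thm:bernst}(2), $V(M)\cap \cal{L}$ is coisotropic in $\cal{L}$ on its smooth locus. Simultaneously, holonomicity forces the smooth locus of $V(M)\cap \cal{L}$ to be isotropic in $\cal{L}$. A subvariety whose smooth locus is both isotropic and coisotropic is Lagrangian, so every irreducible component of $V(M)\cap \cal{L}$ has dimension exactly $\tfrac{1}{2}\dim \cal{L} = \tfrac{1}{2}\dim V(H_c/I)$. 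Theorem \ref{thm:bernst}(3) says every irreducible component of $V(M)$ meets $\cal{L}$ nontrivially, so every component of $V(M)$ is the closure of a component of $V(M)\cap \cal{L}$. This gives equidimensionality (yielding (2)) as well as the equality $2\dim V(M) = \dim V(H_c/I)$ for simple $M$.

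For finite length in part (1), I would introduce a multiplicity invariant $m(M)$ equal to the sum of the multiplicities of $\gr M$ along the top-dimensional components of its support in $(\fk{h}\oplus \fk{h}^*)/W$, where the grading is taken with respect to a fixed good filtration and the commutative graded algebra $\bC[\fk{h}\oplus \fk{h}^*]^W = Z(\gr H_c)$. The key property needed is that $m$ is additive on short exact sequences $0\to M'\to M\to M''\to 0$ of holonomic modules with $d(M')=d(M)=d(M'')$, and that holonomicity is preserved under submodules and quotients (which in turn follows from additivity of singular support in short exact sequences). Then, after applying the simple case to each composition factor, an induction on $d(M)$ bounds the length of any strictly increasing chain by $m(M)$ plus invariants of smaller-dimensional strata. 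Once finite length is established, the dimension equality in (1) for general holonomic $M$ reduces, via a composition series, to an analysis of $V(M)$ as the union of the $V(M_i/M_{i-1})$ together with the fact that $V(H_c/I)$ is the union of the $V(H_c/\ann(M_i/M_{i-1}))$, which is controlled by the simple case.

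The main obstacle is the additivity of the leading coefficient of the Hilbert polynomial in the Cherednik setting, because the associated graded algebra $\bC[\fk{h}\oplus \fk{h}^*]\rtimes \bC W$ is noncommutative. One should reduce to the commutative ring $\bC[\fk{h}\oplus \fk{h}^*]$, which sits inside as a subring of finite index: a finitely generated $(\bC[\fk{h}\oplus \fk{h}^*]\rtimes \bC W)$-module is automatically finitely generated over $\bC[\fk{h}\oplus \fk{h}^*]$, and one uses the standard Hilbert polynomial theory there together with $W$-averaging to transfer additivity back. Given this, the rest of the argument is a fairly mechanical combination of the tools already assembled in the paper.
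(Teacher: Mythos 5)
This theorem is not proved in the paper at all: it is quoted verbatim as Theorem 1.2 of \cite{L}, so there is no internal proof to compare against, and any argument you give is really a reconstruction of Losev's. With that caveat, your treatment of part (2) and of the dimension equality for \emph{simple} $M$ is correct and is the natural deduction: combining the coisotropy from Theorem \ref{thm:bernst}(2) with the isotropy from holonomicity forces the smooth locus of $V(M)\cap\cal{L}$ to be Lagrangian in the unique maximal leaf $\cal{L}$ of $V(H_c/I)$, and Theorem \ref{thm:bernst}(3) then promotes this to equidimensionality of $V(M)$ with $\dim V(M)=\tfrac12\dim\cal{L}=\tfrac12\dim V(H_c/I)$. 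Your reduction of the dimension equality for general $M$ to the simple case via $V(M)=\bigcup_i V(M_i/M_{i-1})$ and $\prod_i \ann(M_i/M_{i-1})\subseteq I$ is also fine, \emph{once finite length is known}.

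The gap is in the finite length argument. The multiplicity $m(M)$ along the top-dimensional components is additive only on short exact sequences all of whose terms have support of the same dimension, so it bounds only the number of composition factors $N$ with $\dim V(N)=\dim V(M)$. In the Weyl-algebra case this suffices because Bernstein's inequality pins down the support dimension of every nonzero subquotient; here it does not, since for a subquotient $N$ the inequality only gives $2\dim V(N)\geq \dim V(H_c/\ann N)$, and $\ann N$ can be strictly larger than $I$. Such drops in support dimension genuinely occur at special parameters: the paper's own example ($W=S_3$, $c=1/3$) exhibits a holonomic module with $\dim V=2$ admitting a finite-dimensional quotient. Consequently the ``induction over smaller-dimensional strata'' that you wave at is where essentially all of the work lives: one needs, for each symplectic leaf $\cal{L}'$ of $(\fk{h}\oplus\fk{h}^*)/W$, a multiplicity invariant for modules supported on $\overline{\cal{L}'}$ that is additive and nonvanishing on the relevant composition factors, and constructing these on the singular quotient requires the finiteness of the set of leaves and of their fundamental groups (equivalently, the finite length of $H_c$ as a bimodule, Losev's Theorem 1.3, which the paper invokes separately). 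Your proposed fix of ``reducing to $\bC[\fk{h}\oplus\fk{h}^*]$ and $W$-averaging'' handles the noncommutativity of $\gr H_c$ but not this issue, which concerns the stratified geometry of the support rather than the ring. So the finite length claim is not a mechanical consequence of the tools assembled in the paper; it is the substantive content of the cited theorem.
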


\section{GK dimension and holonomicity}
\subsection{Regular values}
As noted in the introduction, the respresentation theory of $H_c$ can change dramatically based on the choice of parameters.  As such, it is useful to draw a distinction between properties which only hold generically in the parameter space, and those which are true for arbitrary choices.  First let us specify precisely what we mean by ``generic'' parameters.

Let us recall the following notions from \cite{BEG03}.

\begin{defn}
  A parameter $c$ is called \emph{regular} if the category $\cal{O}_c$ studied in \cite{GGOR} is semisimple.
\end{defn}

\begin{thm}
  The following are equivalent:
  \begin{enumerate}
  \item $c$ is regular,
  \item the associated Hecke algebra is semisimple,
  \item the Cherednik algebra $H_c$ is simple.
  \end{enumerate}
\end{thm}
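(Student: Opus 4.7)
The plan is to prove the three equivalences as the cycle $(3)\Rightarrow(1)\Rightarrow(2)\Rightarrow(3)$, combining the KZ functor of \cite{GGOR} with the analysis of two-sided ideals via singular support from Theorem~\ref{thm:bernst}.

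For $(3)\Rightarrow(1)$ I argue by contrapositive. Suppose $\cal{O}_c$ is not semisimple, so for some irreducible $W$-representation $\tau$ the Verma module $\Delta(\tau)$ is reducible. Then its simple head $L(\tau)$ is a proper quotient of $\Delta(\tau)$, hence $\bC[\fk{h}]$-torsion, so the support of $L(\tau)$ in $\fk{h}/W$ has dimension strictly less than $r$. Since objects of $\cal{O}_c$ are locally nilpotent under $\fk{h}$, the singular support $V(L(\tau))$ lies inside $\fk{h}/W\subset (\fk{h}\oplus\fk{h}^*)/W$, so $\dim V(L(\tau))<r$. By Theorem~\ref{thm:bernst}(1) we then get $\dim V(H_c/\ann L(\tau))\leq 2\dim V(L(\tau))<2r$, so $\ann L(\tau)$ is a nonzero proper two-sided ideal in $H_c$, contradicting simplicity.

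For $(1)\Rightarrow(2)$, if $\cal{O}_c$ is semisimple then every standard $\Delta(\tau)$ is simple and standards, costandards, tiltings, and simples all coincide. The KZ functor $\cal{O}_c\to H_q\text{-mod}$ of \cite{GGOR} is a quotient functor whose kernel consists of modules supported away from $\fk{h}^{\mathrm{reg}}$, but each simple in the semisimple case is a full-support Verma module, so the kernel is trivial and KZ is an equivalence; in particular $H_q$ is semisimple. For $(2)\Rightarrow(3)$ I first reverse the previous step: by the highest-weight cover property of KZ from \cite{GGOR}, the indecomposable projectives of $\cal{O}_c$ inject fully faithfully into projectives of $H_q\text{-mod}$, so if $H_q$ is semisimple all indecomposable projectives of $\cal{O}_c$ come from simple $H_q$-modules, forcing every $\Delta(\tau)$ to be simple and $\cal{O}_c$ to be semisimple. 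Then the Dunkl embedding realizes $H_c$ inside $\cal{D}(\fk{h}^{\mathrm{reg}})\rtimes \bC W$, and the localization of $\bigoplus_\tau \Delta(\tau)$ to $\fk{h}^{\mathrm{reg}}$ becomes $\cal{D}(\fk{h}^{\mathrm{reg}})\otimes \bC W$, which is a faithful $H_c$-module. Any nonzero two-sided ideal of $H_c$ would annihilate some simple object of $\cal{O}_c$, but all such objects are Vermas with faithful localizations, so no such ideal exists and $H_c$ is simple.

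The main obstacle I expect is the step from semisimplicity of $H_q$ back to semisimplicity of $\cal{O}_c$ in $(2)\Rightarrow(3)$: since KZ is only a quotient functor a priori, non-semisimple blocks of $\cal{O}_c$ could in principle collapse under localization to $\fk{h}^{\mathrm{reg}}$. Pinning down this direction requires the full double-centralizer/highest-weight cover machinery of \cite{GGOR}, in particular that KZ is fully faithful on projectives and that it preserves and reflects indecomposability of projective objects.
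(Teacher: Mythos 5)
The paper does not actually reprove this statement: its ``proof'' is a citation of \cite[Theorem 6.6]{BC}, together with the observation that the hypothesis of that theorem --- that the Hecke algebra has dimension $|W|$ (the BMR freeness conjecture) --- is now known for all $W$ by \cite{E16}. Your self-contained attempt is a reasonable sketch of the Berest--Chalykh/GGOR circle of ideas, but it has a concrete gap at the very first step. In $(3)\Rightarrow(1)$ you assert that if $\Delta(\tau)$ is reducible then its simple head $L(\tau)$, being a proper quotient of $\Delta(\tau)$, is $\bC[\fk{h}]$-torsion and hence has support of dimension $<r$. A proper quotient of a free $\bC[\fk{h}]$-module need not be torsion, and this genuinely fails for heads of Vermas: in the paper's own example ($W=S_3$, $c=1/3$) the augmentation ideal of $\bC[\fk{h}]$ is a simple module with full support, and it is the head of $\Delta(\fk{h}^*)$, a proper quotient since the generic rank drops from $2$ to $1$. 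Reducibility of $\Delta(\tau)$ only forces the generic rank of $L(\tau)$ to drop, not its support dimension; to produce a simple with small support (and hence a nonzero annihilator via Theorem~\ref{thm:bernst}) you would need to know that the number of full-support simples --- equivalently, of simple $H_q$-modules --- is strictly smaller than the number of irreducible $W$-representations, which your argument does not establish.

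A second, pervasive issue is that every step involving the Hecke algebra silently identifies $H_q$ with $\End_{\cal{O}_c}(KZ(P_{KZ}))^{\mathrm{op}}$ and matches its simples with those of $\bC W$; both require $\dim H_q=|W|$. Without that input, GGOR only provide a surjection from $H_q$ onto the endomorphism algebra, so semisimplicity of $\cal{O}_c$ would only yield semisimplicity of a quotient of $H_q$, and conversely semisimplicity of $H_q$ would not let you count projectives of $\cal{O}_c$. This is exactly the hypothesis the paper is careful to discharge by invoking \cite{E16}, and it cannot be omitted. Finally, the claim in $(2)\Rightarrow(3)$ that any nonzero two-sided ideal of $H_c$ must annihilate a simple object of $\cal{O}_c$ is stated without justification; it is essentially the substance of the Berest--Chalykh argument (via the finite length of $H_c$ as a bimodule, or a singular-support estimate showing $V(H_c/I)$ is proper) and needs a proof of its own.
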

\begin{proof} A proof of this result appears in \cite[Theorem 6.6]{BC}.  The hypothesis of that theorem, that the dimension of the Hecke algebra agrees with $|W|$, is now known to hold for any finite group $W$: see \cite{E16}.
\end{proof}

This condition is Weil-generic in the parameter space, that is, the complement of the set of regular values is contained in a countable collection of hyperplanes.  We remind the reader that $c=0$ is a regular value.

\begin{eg} Let us describe the set of regular values if $W$ is a real reflection group and for constant parameter $c$.  Let $d_i$, for $i=1, \ldots, \dim \fk{h}$ be the degrees of $W$.  (The Chevalley--Shepard--Todd theorem says that $\bC[\fk{h}]^W$ is a polynomial algebra.  It has an algebraically independent set of generators which are homogeneous of degrees $d_i$.)  Now $c$ is regular if and only if for each $i$, $c\neq m/d_i$ for any integer $m$ not divisible by $d_i$.
\end{eg}
  
\subsection{Generic parameters}
Assume for the remainder of this section that the parameter $c$ is regular, so the annihilator of any nonzero $H_c$-module is trivial.  

\begin{prop} \label{prop:gkdim} Let $M$ be a nonzero, finitely generated $H_c$-module.  The following are equivalent
  \begin{enumerate}
  \item $M$ has GK dimension $r$,
  \item $M$ is holonomic with respect to the Bernstein filtration,
  \item $M$ is holonomic with respect to the geometric filtration.
  \end{enumerate}
\end{prop}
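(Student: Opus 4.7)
The strategy is to identify all three conditions with the single numerical statement $\dim V(M)=r$. The equality $\dim V(M)=\dim V_a(M)=\GK(M)$ follows from Proposition \ref{prop:exts} combined with the standard fact that, since $\gr H_c \cong \bC[\fk{h}\oplus \fk{h}^*]\rtimes \bC W$ is finite over its center $Z(\gr H_c)=\bC[\fk{h}\oplus \fk{h}^*]^W$, the Hilbert polynomial of $\gr M$ as a $Z(\gr H_c)$-module has degree equal to $\dim V(M)$, which by definition is $\GK(M)$.

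For (2) or (3) $\Rightarrow$ (1), the isotropy of $V(M)\cap \cal{L}_{\max}$ in the open leaf $\cal{L}_{\max}$ (of dimension $2r$) forces its dimension to be at most $r$. Any irreducible component $Z$ of $V(M)$ disjoint from $\cal{L}_{\max}$ is dense in its intersection with some smaller leaf $\cal{L}'$ of dimension $2k'<2r$, and the isotropy hypothesis on $\cal{L}'$ gives $\dim Z=\dim(Z\cap \cal{L}')\leq k'<r$. Hence $\dim V(M)\leq r$. The matching lower bound is immediate from Theorem \ref{thm:bernst}(1): regularity of $c$ makes $H_c$ simple, so $\ann(M)=0$, $\dim V(H_c)=2r$, and $2\dim V(M)\geq 2r$. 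The parallel statement for $V_a(M)$ then follows from Proposition \ref{prop:exts}.

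For (1) $\Rightarrow$ (2) and (3), I first reduce to simple $M$. Since $H_c$ is simple, every simple $H_c$-module has trivial annihilator, so Bernstein's inequality gives $\GK\geq r$ on every simple; thus every simple subquotient of $M$ has $\GK$ equal to $r$, and the multiplicity of a good filtration (a positive integer additive in short exact sequences among modules of GK dimension $r$) bounds the composition length, so $M$ has finite length. As $V(M)$ is the union of the singular supports of its composition factors, it suffices to treat simple $M$. For simple $M$ with $\GK(M)=r$, Theorem \ref{thm:bernst}(2) and Corollary \ref{cor:los} together identify $\cal{L}_{\max}$ as the unique leaf meeting $V(M)$ in a dense open subset, show $V(M)=\overline{V(M)\cap \cal{L}_{\max}}$, and assert that $V(M)\cap \cal{L}_{\max}$ is coisotropic in $\cal{L}_{\max}$; the dimension equality $\dim V(M)=r$ upgrades this to Lagrangian.

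To promote the Lagrangian property on the open leaf to leafwise isotropy on every leaf, I would pull back along the quotient map $\pi:\fk{h}\oplus \fk{h}^* \to (\fk{h}\oplus \fk{h}^*)/W$, which is an étale symplectic cover over the regular locus. The preimage $\tilde V(M):=\pi^{-1}V(M)$ is then Lagrangian over $(\fk{h}\oplus \fk{h}^*)^{\mathrm{reg}}$; since isotropy of the Zariski tangent space at a smooth point is a closed condition, $\tilde V(M)$ is isotropic throughout $\fk{h}\oplus \fk{h}^*$. For a parabolic $W'\subset W$, the symplectic subspace $V'=\fk{h}^{W'}\oplus (\fk{h}^{W'})^*$ meets $\tilde V(M)$ in a subvariety whose tangent space at a smooth point lies inside $T_p\tilde V(M)\cap V'$, hence is isotropic in $V'$; the étale quotient $(\fk{h}\oplus \fk{h}^*)^{W',W'-\mathrm{reg}} \to \cal{L}_{W'}$ by $N(W')/W'$ then transfers this to isotropy of $V(M)\cap \cal{L}_{W'}$, proving geometric holonomicity. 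The same argument for $V_a(M)$ gives Bernstein holonomicity, since $\gr H_c$ carries the same underlying Poisson structure in both filtrations. The most delicate step I anticipate is the tangent-space computation at points of $\tilde V(M)\cap V'$ that are singular in $\tilde V(M)$; the cleanest way to sidestep this is to invoke Gabber's theorem, which ensures that $\ann_{\bC[\fk{h}\oplus \fk{h}^*]}(\gr M)$ is a Poisson ideal and hence that $\tilde V(M)$ is coisotropic in $\fk{h}\oplus \fk{h}^*$, from which the dimension constraint immediately forces Lagrangian.
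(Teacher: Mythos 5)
Your argument is correct and follows essentially the same route as the paper: identify all three conditions with $\dim V(M)=\dim V_a(M)=r$, obtain the lower bound from Bernstein's inequality (using that regularity of $c$ makes $H_c$ simple so annihilators vanish), reduce to simple $M$ by additivity of the characteristic cycle, and use Corollary \ref{cor:los} to realize $V(M)$ as the closure of a Lagrangian in the open leaf before pulling back to $\fk{h}\oplus\fk{h}^*$. The ``delicate step'' you flag at the end --- showing that the isotropic preimage stays isotropic when intersected with the symplectic subspaces lying over the smaller leaves, including at singular points --- is exactly the point the paper disposes of by citing \cite[Proposition 1.3.30]{CG}, so your fallback via Gabber's theorem is not needed.
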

\begin{proof}  First let us note that $\GK(M)=\dim V_a(M)$.  This follows from the Hilbert-Serre theorem \cite[Ch. 1, Theorem 7.5]{Ha} if we view $\gr M$ as a graded $\bC[\fk{h}\oplus \fk{h}^*]$-module, for $V_a(M)$ is the image of the support of $\gr M$ under the finite map $\fk{h}\oplus\fk{h}^* \to (\fk{h}\oplus\fk{h}^*)/W$.

  Suppose $M$ is holonomic with respect either filtration.  Then part (1) of Theorem \ref{thm:loshol}, that is to say Losev's work, already implies that $\dim V_a(M) = \dim V(M)$ (Proposition \ref{prop:exts}) has dimension $r$.  Thus $M$ has GK dimension $r$.

  Conversely, suppose $M$ has GK dimension $r$.  Just as for holonomic $\cal{D}$-modules, we may define the arithmetic (resp., geometric) singular $r$-cycle of $M$.  Note that Theorem \ref{thm:bernst} implies that the singular support of any subquotient of $M$ has dimension $r$.  The singular cycle is additive over short exact sequences, which shows that $M$ has finite length, and so we may assume that $M$ is simple.  Then again by Corollary \ref{cor:los}, $V_a(M) = \overline{Y}$ (resp., $V(M)= \overline{Y}$) where $Y$ is an isotropic subvariety of the maximal symplectic leaf $\cal{L}$ of $(\fk{h}\oplus \fk{h}^*)/W$.  Let $V'(M)$ be the preimage of $V_a(M)$ (resp., $V(M)$) in $\fk{h}\oplus \fk{h}^*$.  Then $V'(M)$ is isotropic, being the closure of the preimage of $Y$, so by \cite[Proposition 1.3.30]{CG} the intersection of $V'(M)$ with the preimage of each symplectic leaf is again isotropic.  Hence $M$ is holonomic with respect to either filtration.
\end{proof}

As a consequence of this proposition, in the case $c=0$, we see that our notion of holonomic modules coincides with the usual definition for $W$-equivariant $\cal{D}$-modules.

\begin{cor} Let $M$ be a nonzero, finitely generated $H_c$-module.  Then $M$ is holonomic if and only if $\Ext^i_{H_c}(M,H_c)= 0 $ for $i\neq r$.
\end{cor}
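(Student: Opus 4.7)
The plan is to reduce everything to Proposition \ref{prop:gkdim} (holonomic $\Leftrightarrow$ $d(M)=r$) together with the numerical data in Proposition \ref{prop:exts}. The key auxiliary input will be a ``Bernstein inequality'' consequence of the regularity assumption: since $c$ is regular, $H_c$ is simple, so any nonzero finitely generated $H_c$-module $N$ (left or right) has annihilator $0$, hence $V(H_c/\ann N) = (\fk{h}\oplus\fk{h}^*)/W$ has dimension $2r$, and Theorem \ref{thm:bernst}(1) gives $d(N)\geq r$.

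For the forward direction, I would start from $M$ holonomic, so $d(M)=r$ by Proposition \ref{prop:gkdim}. Proposition \ref{prop:exts}(1) then yields $j(M)=2r-r=r$, giving $\Ext^i_{H_c}(M,H_c)=0$ for $i<r$. For $i>r$, I would apply Proposition \ref{prop:exts}(2) to $N_i := \Ext^i_{H_c}(M,H_c)$, obtaining $d(N_i)\leq 2r-i<r$. Combined with the lower bound $d(N_i)\geq r$ from the Bernstein inequality above (applied to the right module $N_i$), this forces $N_i=0$.

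For the converse, suppose $\Ext^i_{H_c}(M,H_c)=0$ for all $i\neq r$. Since $M\neq 0$, Proposition \ref{prop:exts}(1) implies that $j(M)$ is a well-defined finite integer satisfying $d(M)=2r-j(M)$, so in particular $\Ext^{j(M)}_{H_c}(M,H_c)\neq 0$. The vanishing hypothesis then forces $j(M)=r$, hence $d(M)=r$, and Proposition \ref{prop:gkdim} concludes that $M$ is holonomic.

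I do not expect a serious obstacle here; the only subtle point is making sure the Bernstein inequality is available for the right $H_c$-modules $\Ext^i_{H_c}(M,H_c)$, which is legitimate since Theorem \ref{thm:bernst} is insensitive to the left/right distinction once one notes that $H_c$ is symmetric under an anti-involution (or, more directly, that simplicity of $H_c$ makes the annihilator argument symmetric). Beyond that, the whole statement is a bookkeeping exercise with $j(M)$ and the Auslander-type inequalities of Proposition \ref{prop:exts}, exactly as for holonomic $\cal{D}$-modules.
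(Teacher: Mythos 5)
Your proposal is correct and follows essentially the same route as the paper: part (1) of Proposition \ref{prop:exts} (together with $d(M)=r$ from Proposition \ref{prop:gkdim}) handles $i<r$ and the converse, while part (2) combined with Bernstein's inequality for the right module $\Ext^i_{H_c}(M,H_c)$ kills the range $i>r$. The paper's proof is just a terser version of the same bookkeeping, and your extra care about the left/right symmetry of the Bernstein inequality is a reasonable (implicit in the paper) point.
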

\begin{proof}
  If $M$ is holonomic, then part (1) of Proposition \ref{prop:exts} implies that $\Ext^i_{H_c}(M,H_c)= 0 $ for $i< r$.  Finally, if $i> r$ then by part (2) we have $d(\Ext^i_{H_c}(M,H_c))<r$ and so by Bernstein's inequality $\Ext^i_{H_c}(M,H_c)=0$.

  If, on the other hand, $\Ext^i_{H_c}(M,H_c)= 0 $ for $i\neq r$ then part (1) of Proposition \ref{prop:exts} implies that $d(M)=r$.  This shows $M$ is holonomic by Proposition \ref{prop:gkdim}.
\end{proof}

\begin{rmk}
Below we will define the Verdier dual functor $\RHom_{H_c}(\cdot,H_c)$ on the category of holonomic modules, which for generic parameters will just be $\Ext^r_{H_c}(M,H_c)$.  Thus we have $M\simeq \Ext^r_{H_c^\mathrm{op}}(\Ext^r_{H_c}(M,H_c),H_c)$.
\end{rmk}

\subsection{Arbitrary parameters}
\begin{prop} \label{prop:gkdimsing} Let $M$ be a nonzero, finitely generated $H_c$-module with annihilator $I$.
  \begin{itemize} 
  \item[(i)] \emph{(Theorems 1.2, 1.3 of \cite{L}).}
    If $M$ is holonomic (with respect to either the Bernstein or the geometric filtration) then $\GK(M)=\frac{1}{2}\dim V(H_c/I)$.
  \item[(ii)]
    If $M$ is simple and $\GK(M)=\frac{1}{2}\dim V(H_c/I)$ then $M$ is holonomic (again with respect to either filtration).
  \end{itemize}
\end{prop}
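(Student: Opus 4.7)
The plan is to adapt the converse direction of Proposition \ref{prop:gkdim}, replacing the role played there by regularity of $c$ (which forced the annihilator to be trivial) with simplicity of $M$ (which lets us invoke Corollary \ref{cor:los}). As in that proof, Hilbert--Serre gives $\GK(M) = \dim V_a(M)$, and Proposition \ref{prop:exts} shows this equals $\dim V(M) = d(M)$; the hypothesis then reads $d(M) = \tfrac{1}{2}\dim V(H_c/I)$.

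Since $M$ is simple, Corollary \ref{cor:los} produces a unique maximal symplectic leaf $\cal{L}$ of $V(H_c/I)$, with $V(H_c/I) = \overline{\cal{L}}$ and $V(M) = \overline{V(M)\cap \cal{L}}$. In particular $\dim \cal{L} = \dim V(H_c/I) = 2d(M)$, while $\dim (V(M)\cap \cal{L}) = d(M)$. Theorem \ref{thm:bernst}(2) tells us $V(M)\cap \cal{L}$ is coisotropic in the symplectic leaf $\cal{L}$, and a coisotropic subvariety of a symplectic variety whose dimension is exactly half the ambient dimension must be Lagrangian, in particular isotropic.

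The remainder mirrors the end of the proof of Proposition \ref{prop:gkdim}. Pull back along $\pi\colon \fk{h}\oplus \fk{h}^*\to (\fk{h}\oplus \fk{h}^*)/W$, which is finite étale on the regular stratum over $\cal{L}$ and therefore preserves isotropicity. The preimage $V'(M)$ of $V(M)$ is the closure of $\pi^{-1}(V(M)\cap \cal{L})$, and hence isotropic in $\fk{h}\oplus \fk{h}^*$. Then \cite[Proposition 1.3.30]{CG} gives that the intersection of $V'(M)$ with the preimage of each symplectic leaf of $(\fk{h}\oplus \fk{h}^*)/W$ is isotropic, so $V(M)\cap \cal{L}'$ is isotropic in every leaf $\cal{L}'$. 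The same argument applies verbatim to $V_a(M)$, so $M$ is holonomic with respect to either filtration.

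The only step that really requires anything is the coisotropic-of-half-dimension-is-Lagrangian observation, which is exactly where the $\GK$ hypothesis enters; everything else is an immediate application of Losev's structural results together with the pullback-to-the-cover technique used in the previous proposition. Simplicity of $M$ is essential, since it is what funnels all of $V(M)$ into a single maximal leaf via Corollary \ref{cor:los}; without simplicity one would have to reduce to the simple case via the singular cycle, but the absence of regularity of $c$ prevents the clean additivity argument used in Proposition \ref{prop:gkdim} and explains why part (ii) is stated only for simple modules.
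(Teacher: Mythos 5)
Your proof is correct and follows the paper's own argument for part (ii) essentially verbatim: use Corollary \ref{cor:los} to identify $V(M)$ with the closure of its intersection with the maximal leaf of $V(H_c/I)$, observe that this intersection is isotropic, and then pass to the preimage in $\fk{h}\oplus\fk{h}^*$ so that \cite[Proposition 1.3.30]{CG} handles the remaining leaves. The only difference is that you spell out the step the paper leaves implicit --- that $V(M)\cap\cal{L}$, coisotropic by Theorem \ref{thm:bernst}(2), is forced to be Lagrangian (hence isotropic) by the half-dimension hypothesis --- which is exactly where the $\GK$ assumption enters.
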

\begin{proof}
  First assume $M$ is holonomic.  Then part (1) of Theorem \ref{thm:loshol} already implies that $\dim V_a(M) = \dim V(M)$ (Proposition \ref{prop:exts}) is equal to $\frac{1}{2}\dim V(H_c/I)$.  Thus $\GK(M)=\frac{1}{2}\dim V(H_c/I)$.

  Now suppose $M$ is simple and has GK dimension $\frac{1}{2}\dim V(H_c/I)$.  Then again by Corollary \ref{cor:los}, $V_a(M) = \overline{Y}$ (resp., $V(M)= \overline{Y}$) where $Y$ is an isotropic subvariety of the maximal symplectic leaf $\cal{L}$ of $V_a(H_c/I)$ (resp., $V(H_c/I)$).  Let $V'(M)$ be the preimage of $V_a(M)$ (resp., $V(M)$) in $\fk{h}\oplus \fk{h}^*$.  Then $V'(M)$ is isotropic, being the closure of the preimage of $Y$, so by \cite[Proposition 1.3.30]{CG} the intersection of $V'(M)$ with the preimage of each symplectic leaf is again isotropic.  Hence $M$ is holonomic.
\end{proof}

\begin{eg}
Here is an example which shows that the condition that $M$ be simple is necessary for $\GK(M)=\frac{1}{2}\dim V(H_c/I)$ to imply that $M$ is holonomic.  Consider the case $\fk{h}=\bC\oplus \bC$, $W=\bZ/2\bZ$ acting by reflection on the first copy of $\bC$ and trivially on the second one.  Then $H_c(W,\fk{h}) = H_c(W,\bC) \otimes \cal{D}(\bC)$.  Take $c=1/2$.  Now consider the following module for $H_c(W,\fk{h})$:
$$M(\bC_-) \boxtimes \cal{O} \oplus \bC \boxtimes \cal{D}(\bC),$$
where $M(\bC_-)$ is the irreducible Verma module for $H_c(W,\bC)$, $\bC$ is the irreducible finite dimensional module respectively for $H_c(W,\bC)$, and $\cal{O}$ is the $\cal{D}(\bC)$-module given by polynomial functions on $\bC$.  This module has Gelfand-Kirillov dimension 2 and trivial annihilator, but is not holonomic.
\end{eg}

\subsection{Quantized symplectic resolutions}
The proof of Proposition \ref{prop:gkdimsing} also applies to the setting of quantized symplectic resolutions.  Before we express the result in this case, let us recall the set-up from \cite{L}.

Let $\cal{A} = \bigcup_{i= 0}^\infty \cal{A}_i$ be an associative $\bC$-algebra with an ascending filtration such that the associated graded algebra $A=\gr \cal{A}$ is commutative and finitely generated.  Let $d$ be a positive integer such that $[\cal{A}_i,\cal{A}_j] \subset \cal{A}_{i+j-d}$, so we have a natural degree $-d$ Poisson bracket on $A$.  Assume that $X$ admits a conical symplectic resolution of singularities $\rho:\tilde{X}\to X$ and that $\cal{A} = \Gamma(\cal{D})$ for some filtered quantization $\cal{D}$ of $\tilde{X}$ (i.e., a filtered quantization of the sheaf $\cal{O}_{\tilde X}$ in the conical topology).

The definition of holonomic modules is analogous to the case of the Cherednik algebra.  If $M$ is a finitely generated $\cal{A}$-module, we may equip $M$ with a good filtration, and define the singular support $V(M)$ to be the set-theoretic support in $X$ of $\gr M$.  Again this subvariety is independent of the choice of good filtration.  We say $M$ is \emph{holonomic} if the smooth locus of $V(M)\cap \cal{L}$ is isotropic in every symplectic leaf $\cal{L}$ of $(\fk{h}\oplus \fk{h}^*)/W$.

\begin{prop} \label{prop:sympres} Let $M$ be a nonzero, finitely generated $\cal{A}$-module with annihilator $\cal{I}$.
  \begin{itemize}
  \item[(i)] \emph{(Theorems 1.2, 1.3 of \cite{L}).} If $M$ is holonomic then $\dim V(M)=\frac{1}{2}\dim V(\cal{A}/\cal{I})$.
  \item[(ii)] If $M$ is simple with $\dim V(M)=\frac{1}{2}\dim V(\cal{A}/\cal{I})$ then $M$ is holonomic.
  \end{itemize}
\end{prop}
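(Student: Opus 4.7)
My plan is to adapt the proof of Proposition \ref{prop:gkdimsing} almost verbatim, replacing the quotient map $\fk{h}\oplus \fk{h}^* \to (\fk{h}\oplus \fk{h}^*)/W$ with the symplectic resolution $\rho: \tilde X \to X$. Part (i) is a direct citation: Losev's Theorem 1.2 of \cite{L} is formulated at the level of generality of quantized conical symplectic resolutions, so no further argument is needed.

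For part (ii), I would assume $M$ is simple with $\dim V(M) = \frac{1}{2}\dim V(\cal{A}/\cal{I})$. Losev's Bernstein inequality (Theorem \ref{thm:bernst}) and Corollary \ref{cor:los} apply verbatim in this setting. By the analog of Corollary \ref{cor:los}, there is a unique maximal symplectic leaf $\cal{L}_0$ of $V(\cal{A}/\cal{I})$ with $V(M) = \overline{V(M)\cap \cal{L}_0}$ and with $V(M)\cap \cal{L}_0$ coisotropic in $\cal{L}_0$. Since $\dim \cal{L}_0 = \dim V(\cal{A}/\cal{I}) = 2\dim V(M)$, a coisotropic subvariety of half dimension is Lagrangian, and so $V(M)\cap \cal{L}_0$ is in fact isotropic in $\cal{L}_0$.

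To promote this to isotropicity in every leaf, I would lift to $\tilde X$. Because $\rho$ restricts to a symplectomorphism over $\cal{L}_0$, the subvariety $Z := \rho^{-1}(V(M))$ is the closure of the isotropic $\rho^{-1}(V(M)\cap \cal{L}_0)$, hence is itself isotropic in the smooth symplectic $\tilde X$. Applying \cite[Proposition 1.3.30]{CG} to $Z \subset \tilde X$, exactly as in the proof of Proposition \ref{prop:gkdimsing}, yields that $Z$ meets the preimage of every symplectic leaf $\cal{L}$ of $X$ in a subvariety isotropic with respect to $\omega_{\tilde X}$. Using that $\rho$ is Poisson, this descends to isotropicity of $V(M)\cap \cal{L}$ in $\cal{L}$, establishing holonomicity of $M$.

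I expect the main obstacle to be this final descent from $\tilde X$ down to the leaves of $X$. Unlike the finite \'etale covering of the Cherednik setting, $\rho$ may collapse positive-dimensional fibers over a non-maximal leaf $\cal{L}$, and care is required to see that the vanishing of $\omega_{\tilde X}$ on the smooth locus of $Z \cap \rho^{-1}(\cal{L})$ correctly transfers, via the Poisson relation $\rho^*\omega_{\cal{L}} = \omega_{\tilde X}|_{\rho^{-1}(\cal{L})}$, to the vanishing of $\omega_{\cal{L}}$ on the smooth locus of $V(M)\cap \cal{L}$.
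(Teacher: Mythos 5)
There is a genuine gap at the two places where your argument interacts with the resolution $\rho$. First, the claim that ``$\rho$ restricts to a symplectomorphism over $\cal{L}_0$'' is false in general: $\cal{L}_0$ is the maximal symplectic leaf of $V(\cal{A}/\cal{I})$, not of $X$, so unless $\cal{I}=0$ it is a non-open leaf of $X$, over which $\rho$ typically has positive-dimensional fibers. You therefore cannot conclude by transport of structure that $\rho^{-1}(V(M)\cap \cal{L}_0)$ is isotropic. Second, the descent you yourself flag as the ``main obstacle'' cannot be carried out with \cite[Proposition 1.3.30]{CG} as in Proposition \ref{prop:gkdimsing}: that argument works in the Cherednik case because the preimages of the leaves in $\fk{h}\oplus\fk{h}^*$ are open subsets of symplectic subspaces, whereas $\rho^{-1}(\cal{L})$ is not a symplectic subvariety of $\tilde{X}$ (it contains the isotropic fibers of $\rho$), so intersecting an isotropic $Z$ with it and pushing down is not covered by that proposition. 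A further, smaller issue: your $Z=\rho^{-1}(V(M))$ need not be the closure of $\rho^{-1}(V(M)\cap\cal{L}_0)$, since fibers over boundary points of $V(M)$ can fail to lie in that closure, and $\rho^{-1}(V(M))$ can have dimension exceeding $\frac{1}{2}\dim X$.

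The paper closes both gaps with a single external input, \cite[Lemma 5.1]{L}, which relates isotropic subvarieties of $\tilde{X}$ to subvarieties of $X$ meeting every leaf isotropically, in both directions. Concretely: it yields that $\rho^{-1}(Y)$ is isotropic in $\tilde{X}$ when $Y$ is isotropic in a leaf; one then takes $Z$ to be the closure of $\rho^{-1}(Y)$ (still isotropic), uses properness of $\rho$ to see $\rho(Z)=V(M)$, and applies the same lemma again to conclude that $V(M)$ meets every leaf of $X$ isotropically. Your outline becomes correct once each of the two problematic steps is replaced by an appeal to that lemma.
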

\begin{proof}
  The proof of the first implication is the same as the proof of Proposition \ref{prop:gkdimsing}, so suppose $M$ is simple and has $\dim V(M) = \frac{1}{2}\dim V(\cal{A}/\cal{I})$.  Then by \cite[Theorem 1.1]{L}, $V(M)$ is the closure of an isotropic subvariety $Y$ of the maximal symplectic leaf $\cal{L}$ of $V(\cal{A}/\cal{I})$.  Now $\rho^{-1}(Y)$ is isotropic in $\tilde{X}$ by \cite[Lemma 5.1]{L} so the closure $Z$ of $\rho^{-1}(Y)$ is isotropic as well.  Finally, $\rho(Z) = V(M)$ since $\rho$ is proper, so \cite[Lemma 5.1]{L} shows that $M$ is holonomic.
\end{proof}

\section{Functoriality}

\subsection{Holonomicity for Cherednik modules on a variety}

Suppose $X$ is a smooth variety with the action of a finite group $W$ for which the quotient variety $X/W$ exists.  Define the set $S(X)$ of \emph{reflections} of $X$ to be the set of pairs $(w,Z)$ such that $w\in W$ and $Z$ is an irreducible component of $X^w$ having codimension 1 in $X$.  Let $c:S(X)\to \bC$ be a $W$-invariant function.  Finally, we must choose an element $\omega \in \mathbb{H}^2(X, \Omega_X^{\geq 1})^W$, where $\Omega_X^{\geq 1}$ is the two-term subcomplex $\Omega_X^1 \to (\Omega_X^2)^\mathrm{cl}$, concentrated in degrees 1 and 2, of the algebraic De Rham complex and where $(\Omega_X^2)^\mathrm{cl}$ denotes the subsheaf of closed forms in $\Omega_X^2$.  Sheaves of twisted differential operators are classified by the space $\mathbb{H}^2(X, \Omega_X^{\geq 1})$, see section 2 of \cite{BB}.  Let us write $\cal{D}^\omega_X$ to refer to the sheaf of twisted differential operators corresponding to $\omega$.

We have the sheaf of Cherednik algebras $H_{c,\omega}(W,X):=H_{1,c,\omega}(W,X)$ on $X$ in the $W$-equivariant topology (alternatively, as a sheaf on $X/W$), defined in \cite{E04}.  If we write $D = \bigcup_{(w,Z)\in S(X)} Z$ and $j:X\setminus D \to X$ for the inclusion, then $H_{c,\omega}(W,X)$ is defined as a subalgebra of the sheaf $j_* j^*(\cal{D}^\omega_X \rtimes \bC W)$ generated locally by $\cal{O}_X$, $\bC W$, and Dunkl operators $D_y$ associated to vector fields $y$.  Again when there is no confusion about $W,X$ we will just write $H_{c,\omega}$, or if $\omega =0$ just $H_c$.  This sheaf has a natural filtration, the analogue of the geometric filtration above, for which $\gr H_{c,\omega} = p_*(\cal{O}_{T^*X}\rtimes \bC W)$, where $p:T^*X \to X$ is the projection.  Thus we may define the singular support $V(M)\subset T^*X/W$ of any sheaf of modules $M$ which is coherent over $H_{c,\omega}$.  For such $W,X$ we have a notion of holonomic modules.

To define these, we work locally.  Given a $W$-stable affine open inclusion $j:U\to X$, we may choose a good filtration on the restriction $M(U)$ of $M$ to $U$.  Note, the restriction $H_{c,\omega}(W,X)|_U$ is just $H_{j^* c,j^* \omega}(W,U)$, where $j^* c$ is the restriction of $c$ to the set of reflections which intersect with $U$ and $j^* \omega$ is the restriction of $\omega$ to $U$.  Then we glue together $V(M)\subset T^*X/W$ from the (reduced) subvarieties of $T^*U / W$ corresponding to $$\mathrm{Ann}_{Z(\gr H_{j^* c,j^* \omega}(W,U))} (\gr M(U)),$$ where $U$ ranges over an affine open cover of $X$.  As before, we say $M$ is \emph{holonomic} if the smooth locus of $V(M)\cap \cal{L}$ is isotropic in every symplectic leaf $\cal{L}$ of $T^*X/W$.

We plan to show in the next paper that the main results of \cite{L} hold in this larger generality.


Finally, let us mention the \emph{stabilizer stratification} of $X$.  The strata here are in correspondence with conjugacy classes of parabolic subgroups of $W$ as follows.  Let $Par(W,X)$ denote the set of subgroups $W'\subset W$ that occur as isotropy groups of points of $X$.  We refer to such subgroups as \emph{parabolic} subgroups of $W$ with respect to its action on $X$.  Now for a parabolic $W'\subset W$, the stratum corresponding to its conjugacy class is $X_{W'}=W\cdot X^{W',W'-\mathrm{reg}}$, where
$$X^{W',W'-\mathrm{reg}}= \{x\in X|\mathrm{Stab}_W(x)=W'\}.$$
Note furthermore that the stabilizer stratification on $X$ induces a stratification on $X/W$ to which we give the same name.

\subsection{Left and right modules} \label{sec:lr}
There is a natural equivalence between categories of left and right Cherednik modules for $(W,X)$, though the parameters for these two categories will in general be different.  Throughout this paper, sheaves of $H_{c,\omega}$-modules on a variety are always assumed to be quasicoherent over its structure sheaf.

Given a line bundle $\cal{L}$ on $X$, let $\omega_\cal{L}\in \mathbb{H}^2(X, \Omega_X^{\geq 1})$ be given by its curvature. Let $K_X$ be the canonical line bundle on $X$, which is naturally $W$-equivariant, and say $\omega_\mathrm{can} = \omega_{K_X}$.  It is well-known that $K_X$ carries a right action of $\cal{D}_X$, which leads to an isomorphism $(\cal{D}_X)^\mathrm{op}\simeq \cal{D}_X^{\omega_\mathrm{can}}$, and more generally $$(\cal{D}_X^\nu)^\mathrm{op}\simeq \cal{D}_X^{\omega_\mathrm{can} - \nu}.$$  If $y\in \Gamma(U,TX)$, then under this isomorphism a twisted Lie derivative $\mathbb{L}_y \in (\cal{D}_X^\nu)^\mathrm{op}(U)$ goes to the negative twisted Lie derivative for $y$: $-\mathbb{L}_y \in \cal{D}_X^{\omega_\mathrm{can} - \nu}(U)$.  See \cite[Section 2.2]{E04}.

Set $\overline{c}(w,Z)=c(w^{-1},Z)$.  For each $(w,Z)\in S(X)$, let $\omega_Z = \omega_{\cal{I}_Z}$ where $\cal{I}_Z$ is the ideal sheaf for $Z$ in $X$.
\begin{lem}
  We have an isomorphism $$H_{c,\nu}(W,X)^\mathrm{op} \simeq H_{\overline{c}, \omega_\mathrm{can} - \nu + \sum_{(w,Z)\in S(X)} 2c(w,Z)\omega_Z } (W,X),$$ given locally on an affine open $U\subset X$ by $x\mapsto x$ for $x\in \Gamma(U,\cal{O}_X)$, $D_y\mapsto -D_y$ for $y\in \Gamma(U,TX)$, and $g\mapsto g^{-1}$ for $g\in W$.
\end{lem}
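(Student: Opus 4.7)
Both algebras in the statement embed as subalgebras of $j_*j^*$ applied to the corresponding twisted semidirect products, and the proposed map is already specified on local generators.  My plan is to first promote the classical anti-isomorphism $(\cal{D}_X^\nu)^{\mathrm{op}}\simeq \cal{D}_X^{\omega_\mathrm{can}-\nu}$ recalled just before the lemma to an anti-isomorphism of the ambient sheaf of algebras
$$\Psi\colon \bigl(j_*j^*(\cal{D}_X^\nu\rtimes \bC W)\bigr)^{\mathrm{op}} \longrightarrow j_*j^*(\cal{D}_X^{\omega_\mathrm{can}-\nu}\rtimes \bC W)$$
by further imposing $g\mapsto g^{-1}$ on $\bC W$; compatibility with the cross relations is immediate from $W$-equivariance of $K_X$.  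The task then reduces to checking that $\Psi$ carries the subalgebra $H_{c,\nu}(W,X)$ onto the claimed target Cherednik algebra, after which bijectivity follows from the PBW decompositions of both sides.

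\textbf{Steps.}  Since $\Psi$ restricted to $\cal{O}_X$ and to $\bC W$ lands in the target by construction, the real content is the image of a Dunkl operator.  On a $W$-stable affine open $U\subset X$ with local defining equations $f_Z$ of the reflection hypersurfaces, Etingof's formula (cf.\ \cite[Sec.\ 2.2]{E04}) presents the Dunkl operator in the shape
$$D_y^{c,\nu}=\mathbb{L}_y^\nu+\sum_{(w,Z)\in S(X)}\frac{2c(w,Z)}{1-\lambda_{w,Z}}\cdot\frac{y(f_Z)}{f_Z}\cdot (1-w).$$
I apply $\Psi$ term by term.  The Lie-derivative piece becomes $-\mathbb{L}_y^{\omega_\mathrm{can}-\nu}$, and by anti-multiplicativity each correction summand becomes (the same scalar times) $(1-w^{-1})\cdot\frac{y(f_Z)}{f_Z}$.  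Commuting $\frac{y(f_Z)}{f_Z}$ past $(1-w^{-1})$ in the smash product produces the ``expected'' term $\frac{y(f_Z)}{f_Z}(1-w^{-1})$ plus a purely diagonal residue proportional to $\bigl(\frac{y(f_Z)}{f_Z}-w^{-1}\cdot\frac{y(f_Z)}{f_Z}\bigr)w^{-1}$.

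Next, I reindex the sum by $w\mapsto w^{-1}$ and invoke $\overline{c}(w,Z)=c(w^{-1},Z)$, $\lambda_{w^{-1},Z}=\lambda_{w,Z}^{-1}$, together with the identity $\frac{1}{1-\lambda^{-1}}=-\frac{\lambda}{1-\lambda}$.  With this bookkeeping, the part of $\Psi(D_y^{c,\nu})$ involving nontrivial group elements collapses exactly to the reflection-correction term of $-D_y^{\bar c,*}$ for the opposite-parameter Cherednik algebra, leaving behind a scalar combination of logarithmic forms $\frac{y(f_Z)}{f_Z}$.  Collecting coefficients, the residual contribution at each $Z$ amounts to precisely $2c(w,Z)$ summed over the pairs $(w,Z)\in S(X)$.

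Finally, I recognize this residual diagonal shift as the effect of the twist modification $\sum 2c(w,Z)\omega_Z$: by construction $\omega_{\cal{I}_Z}$ is represented, in the local trivialization of $\cal{I}_Z$ by $f_Z$, by the logarithmic $1$-form $df_Z/f_Z$, so replacing $\omega_\mathrm{can}-\nu$ by $\omega_\mathrm{can}-\nu+\sum 2c(w,Z)\omega_Z$ shifts the Lie derivative by exactly that diagonal residue.  Hence $\Psi(D_y^{c,\nu})=-D_y^{\bar c,\,\omega_\mathrm{can}-\nu+\sum 2c(w,Z)\omega_Z}$, as required.

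\textbf{Main obstacle.}  The delicate step is the last identification: matching the residual diagonal piece with the $\mathbb{H}^2$-class shift $\sum 2c(w,Z)\omega_Z$.  Both $\omega_{\cal{I}_Z}$ and the log-derivative $df_Z/f_Z$ are local, first-order, residue-type objects supported along $Z$, and the scalar $2c(w,Z)$ is precisely the one that forces the two presentations to agree.  The bookkeeping with the eigenvalue factors $\tfrac{1}{1-\lambda}$ versus $\tfrac{\lambda}{1-\lambda}$ after the reindexing $w\leftrightarrow w^{-1}$ is the most error-prone part, but once carried out the local generating characterization of Cherednik algebras (coherence across the reflection divisor plus normalization of Dunkl operators) pins down the image of $\Psi|_{H_{c,\nu}}$ uniquely.
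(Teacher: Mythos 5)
Your overall route is the same as the paper's: extend the anti-isomorphism $(\cal{D}_X^\nu)^{\mathrm{op}}\simeq \cal{D}_X^{\omega_\mathrm{can}-\nu}$ to the localized smash product with $g\mapsto g^{-1}$, compute the image of a Dunkl operator, reindex $w\leftrightarrow w^{-1}$, and absorb a residual multiplication operator into a shift of the twist. But there is a genuine gap at the commutation step. You claim that moving $\tfrac{y(f_Z)}{f_Z}$ past $(1-w^{-1})$ produces the expected term plus a \emph{purely diagonal} residue. It does not: the leftover is $\bigl(\tfrac{y(f_Z)}{f_Z}-w^{-1}\!\cdot\!\tfrac{y(f_Z)}{f_Z}\bigr)w^{-1}$, which still carries the group element, and the coefficient function is \emph{not} regular along $Z$ (its polar part is $(1-\lambda_{(w,Z)}^{\pm 1})$ times that of $\tfrac{y(f_Z)}{f_Z}$). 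To make the computation close up you must first replace the coefficient of the reflection term by a $W_Z$-semi-invariant representative — this is exactly the paper's device $g_Z=\tfrac{1}{|W_Z|}\sum_{w\in W_Z}\lambda_{(w,Z)}\,w.f_Z$, which satisfies $w.g_Z=\lambda_{(w,Z)}^{-1}g_Z$ and differs from $f_Z$ by a regular function (hence generates the same subalgebra). With that substitution the identity $(w-1)g_Z=g_Z(\lambda^{-1}w-1)$ is exact and the residue really is a multiplication operator; without it your ``residue'' is a singular function times $w^{-1}$ and the claimed collapse onto $-D_y^{\bar c,*}$ is unjustified as written.

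The second soft spot is the final identification of the residual multiplication operator with the class shift $\sum_{(w,Z)}2c(w,Z)\omega_Z$. Your appeal to ``$\omega_{\cal{I}_Z}$ is represented by $df_Z/f_Z$'' is only a heuristic: $df_Z/f_Z$ is not a regular $1$-form on $X$, so it does not directly represent a class in $\mathbb{H}^2(X,\Omega_X^{\geq 1})$, and the precise statement requires a \v{C}ech--de Rham argument. The paper sidesteps this by recognizing the computed operator as a Dunkl operator for the \emph{modified} Cherednik algebra $H_{1,\overline{c},\eta,\omega_\mathrm{can}-\nu}$ with $\eta(Z)=\sum_{w\in W_Z}2c(w,Z)$, and then citing \cite[Proposition 2.18]{E04} to convert the $\eta$-parameter into the stated $\omega$-shift. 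You should either cite that proposition or supply the \v{C}ech--de Rham computation; as it stands this step is asserted rather than proven.
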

\begin{proof} Beginning with the isomorphism $$j_* j^*(\cal{D}_X^\nu \rtimes \bC W)^\mathrm{op}\to j_* j^* (\cal{D}_X^{\omega_\mathrm{can} - \nu}\rtimes \bC W)$$ we must compute the image of a Dunkl operator $D_y$ for $y\in \Gamma(U,TX)$.  For every $(w,Z)\in S(X)$, let $f_Z \in \Gamma(U,\cal{O}_X(Z))$ be a function as in \cite[Definition 2.7]{E04}, and let $\lambda_{(w,Z)}$ be the nontrivial eigenvalue of $w$ on the conormal bundle to $Z$.  Letting $W_Z$ be the pointwise stabilizer of $Z$, set $$g_Z = \frac{1}{|W_Z|}\sum_{w \in W_Z} \lambda_{(w,Z)}w.f_Z.$$  Since $f_Z - g_Z \in \cal{O}_X$, we may write a Dunkl operator for $H_{c,\nu}(W,X)^\mathrm{op}$ on $U$ as $$D_y = \mathbb{L}_y + \sum_{(w,Z)\in S(X)} \frac{2c(w,Z)}{1-\lambda_{(w,Z)}} g_Z (w-1).$$

  Note that any $w\in W$ with $(w,Z)\in S(X)$ acts on $g_Z$ by $w.g_Z = \lambda_{(w,Z)}^{-1} g_Z$.  Thus we may calculate that the image of $D_y$,
  \begin{align*} & -\mathbb{L}_y + \sum_{(w,Z)\in S(X)} \frac{2c(w^{-1},Z)}{1-\lambda_{(w^{-1},Z)}} (w^{-1}-1) g_Z \\
    &= -\mathbb{L}_y + \sum_{(w,Z)\in S(X)} \frac{2\overline{c}(w,Z)}{1-\lambda_{(w,Z)}^{-1}} (w-1) g_Z \\
    &= -\mathbb{L}_y + \sum_{(w,Z)\in S(X)} \frac{2\overline{c}(w,Z)}{1-\lambda_{(w,Z)}^{-1}} g_Z (\lambda_{(w,Z)}^{-1} w-1) \\
    &= -\mathbb{L}_y - \sum_{(w,Z)\in S(X)} g_Z \left( \frac{2\overline{c}(w,Z)}{1-\lambda_{(w,Z)}} (w-1) +2c(w,Z) \right)
  , \end{align*}
  is a negative Dunkl operator $- D_y$ for the ``modified Cherednik algebra'' $H_{1,\overline{c}, \eta, \omega_\mathrm{can} - \nu}$ where $\eta(Z) = \sum_{w\in W_Z} 2c(w,Z)$.  Now we just apply \cite[Proposition 2.18]{E04}.
\end{proof}

Finally, suppose $\cal{L}$ is a $W$-equivariant line bundle on $X$.  Then we have $H_{c,\nu + \omega_\cal{L}}(W,X) \simeq \cal{L} \otimes_{\cal{O}_X} H_{c,\nu}(W,X) \otimes_{\cal{O}_X} \cal{L}^{-1}$. This gives a natural Morita equivalence of $H_{c,\nu}(W,X)$ with $H_{c,\nu + \omega_\cal{L}}(W,X)$: if $M$ is a module for $H_{c,\nu}(W,X)$, then $\cal{L} \otimes_{\cal{O}_X} M$ is a $H_{c,\nu + \omega_\cal{L}}(W,X)$-module.

As a result of the discussion above, we have a canonical Morita equivalence between $H_{c,\nu}(W,X)^\mathrm{op}$ and  $$H_{\overline{c}, - \nu + \sum_{(w,Z)\in S(X)} 2c(w,Z)\omega_Z } (W,X)$$ given by $M \mapsto K_X \otimes_{\cal{O}_X} M$.
\subsection{Pullback}
\label{sec:pull}

Suppose $X,Y$ are smooth $W$-varieties, and $\phi:X\to Y$ is a $W$-equivariant map of varieties.  Choose $c:S(Y)\to \bC$ to be a $W$-invariant function and choose $\omega \in \mathbb{H}^2(Y, \Omega_Y^{\geq 1})^W$.  Let $S_c(Y)$ be the set of all reflections $(w,Z)$ of $Y$ with $c(w,Z)\neq 0$.  We have a pullback functor in cases \ref{sec:melys} and \ref{sec:emb} below.
\subsubsection{Melys morphisms} \label{sec:melys} Suppose $\phi$ is flat and for all $(w,Z)\in S_c(Y)$, we have $\phi^{-1}(Z) \subset X^w$ (set-theoretically).  Such a map $\phi$ is called \emph{melys} (from Welsh) by Bellamy and Martino \cite[Definition 3.3.1]{BM}.  Pullback and pushforward for Cherednik modules are defined with respect to melys morphisms in sections 3.5 and 3.8 of that paper.

To illustrate that the second condition is not automatic, we give an example.
\begin{eg}
Let $X=\bC^2$, $Y=\bC$, and $X\to Y$ be the projection onto the first component.  Let $W=\bZ / 2\bZ$ act on both spaces by $-\mathrm{Id}$.  Now $\{0\} \subset Y$ is a reflection hyperplane so the condition is violated for any nonzero parameter $c:S(Y)\to \bC$.
\end{eg}

We may define a suitable pullback of $c$ to a function $\phi^* c$ on $S(X)$ as follows: $$\phi^* c(w,Z') = m c(w,Z),$$
where $Z$ is the closure of $\phi(Z')$ and $m$ is the scheme-theoretic multiplicity of $Z'$ in $\phi^{-1}(Z)$.  This is a simplification of the formula from section 3.3 of \cite{BM}, as the sum there contains just one term.

The proof of \cite[Proposition 3.4.1]{BM} shows we have a natural map of sheaves of left $H_{\phi^* c,\phi^* \omega}(W,X)$-modules $$H_{\phi^* c,\phi^* \omega}(W,X)\to \phi^* H_{c,\omega}(W,Y) =\cal{O}_X \otimes_{\phi^{-1} \cal{O}_Y} \phi^{-1} H_{c,\omega}(W,Y),$$ which allows us to define a pullback of sheaves of modules $$\phi^{0}: H_{c,\omega}(W,Y)-\mathrm{mod} \to H_{\phi^* c,\phi^* \omega}(W,X)-\mathrm{mod}$$ given by $$M\mapsto \phi^*(M) = \phi^* H_{c,\omega}(W,Y) \otimes_{\phi^{-1} H_{c,\omega}(W,Y)} \phi^{-1} M.$$

Since $\phi$ is flat, this pullback is exact.

\subsubsection{Closed embeddings} \label{sec:emb} Suppose $\phi$ is a closed embedding of smooth $W$-varieties.

In this situation, for each component $X' \subset \phi(X)$ the following condition holds: for every $(w,Z)\in S_c(Y)$, $X'$ is either contained in $Z$ or transverse to $Z$.

Some reflections of $X$ are naturally in one-to-one correspondence with the set of reflections of $Y$ which intersect $X$ transversely: for such a reflection $(w,Z)$ of $Y$, $(w,\phi^{-1}(Z))$ is a reflection of $X$.  Let $S'(X)$ be this set of reflections of $X$ obtained by restriction, and let $S''(X)$ denote the set of all other reflections of $X$.  To see that $S''(X)$ may be nonempty, we give an example.

\begin{eg}
Let $Y=\bC^2$, and let $W=\bZ / 2\bZ$ act by $-\mathrm{Id}$.  Then $Y$ has no reflections, but if $X$ is any line through the origin, then $\{0\}$ is a reflection hyperplane of $X$.
\end{eg}

We must define a suitable pullback of $c$ to a function $\phi^* c$ on $S(X)$.  For reflections $(w,\phi^{-1}(Z))\in S'(X)$ we define $\phi^* c(w,\phi^{-1}(Z))= c(w,Z)$, and for $(w,Z)\in S''(X)$, put $\phi^* c(w,Z)=0$.

It is a standard calculation to see that $\cal{O}_X \otimes_{\phi^{-1} \cal{O}_Y} \phi^{-1} H_{c,\omega}(W,Y)$ admits a natural left action of $H_{\phi^* c,\phi^* \omega}(W,X)$.  Thus we have a injective map of sheaves of left $H_{\phi^* c,\phi^* \omega}(W,X)$-modules $$H_{\phi^* c,\phi^* \omega}(W,X)\to \phi^* H_{c,\omega}(W,Y) =\cal{O}_X \otimes_{\phi^{-1} \cal{O}_Y} \phi^{-1} H_{c,\omega}(W,Y).$$  Finally we define a pullback of sheaves of modules $$\phi^{0}: H_{c,\omega}(W,Y)-\mathrm{mod} \to H_{\phi^* c,\phi^* \omega}(W,X)-\mathrm{mod}$$ given by $$M\mapsto \phi^*(M)= \phi^* H_{c,\omega}(W,Y) \otimes_{\phi^{-1} H_{c,\omega}(W,Y)} \phi^{-1} M.$$

In general, pullback is only right-exact in this setting.

\subsection{Pushforward}
In the situations of the previous section, $$\phi^* H_{c,\omega}(W,Y)$$ becomes a $(H_{\phi^* c,\phi^* \omega}(W,X), \phi^{-1} H_{c,\omega}(W,Y))$-bimodule.  Thus we also have a pushforward for right modules given as $$M\mapsto \phi_*(M\otimes_{H_{\phi^* c,\phi^* \omega}(W,X)}\phi^* H_{c,\omega}(W,Y)).$$

To define pushforward for left modules over the Cherednik algebra, we must use the equivalences of section \ref{sec:lr}.  Under these identifications, we may view $$K_X \otimes_{\cal{O}_X} \phi^* \left( H_{\overline{c},-\omega + \sum_{(w,Z)\in S(Y)} 2c(w,Z)\omega_Z}(W,Y)\right)\otimes_{\phi^{-1}\cal{O}_Y} \phi^{-1} K_Y^{-1}$$ as a $(\phi^{-1} H_{c,\omega}(W,Y),H_{\phi^* c,\phi^* \omega}(W,X))$-bimodule.  Notice that because of how we have defined $\phi^* c$ we automatically have $\phi^* \overline{c} = \overline{\phi^* c}$ and $\phi^* (\sum_{(w,Z)\in S(Y)} 2c(w,Z)\omega_Z) = \sum_{(w,Z)\in S(X)} 2\phi^*c(w,Z)\omega_Z$.  Calling this bimodule $T_{Y \leftarrow X}$ we may write the pushforward of left modules $$\phi_{0}: H_{\phi^* c,\phi^* \omega}(W,X)-\mathrm{mod} \to H_{c,\omega}(W,Y)-\mathrm{mod}$$ as $$\phi_0 : M\mapsto \phi_*(T_{Y \leftarrow X} \otimes_{H_{\phi^* c,\phi^* \omega}(W,X)} M).$$  Henceforth, we will only work with left modules.

It is easy to see that $\phi_0$ is right exact if $\phi$ is an affine map or a closed embedding.  As a special case, if $\phi$ is an open embedding between affine varieties, the functor $\phi_0$ will just be given by restriction from $H_{\phi^* c,\phi^* \omega}(W,X)$ to $H_{c,\omega}(W,Y)$, hence is exact.

\subsection{Fourier transform}
Let us introduce the Fourier transform for rational Cherednik algebras.  We have an isomorphism $$H_c(W,\fk{h}) \simeq H_c(W,\fk{h}^*)$$ given by $x\mapsto x$ for $x\in \fk{h}^*$, $y\mapsto -y$ for $y\in \fk{h}$, and $g\mapsto g$ for $g\in W$.  This isomorphism yields a functor $$\cal{F}_{\fk{h}}: H_c(W,\fk{h})-\mathrm{mod} \to H_c(W,\fk{h}^*)-\mathrm{mod}.$$

\begin{lem}
$\cal{F}_{\fk{h}}$ sends holonomic modules to holonomic modules.
\end{lem}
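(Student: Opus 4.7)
The plan is to prove this via the Bernstein filtration, which the Fourier isomorphism preserves: each generator $x\in\fk{h}^*$, $y\in\fk{h}$, and $g\in W$ of $H_c(W,\fk{h})$ is sent to an element of the same Bernstein degree in $H_c(W,\fk{h}^*)$. In particular, a Bernstein-good filtration on $M$ transports through $\cal{F}_\fk{h}$ to a Bernstein-good filtration on $\cal{F}_\fk{h}(M)$ with the same Hilbert function, hence the same Gelfand--Kirillov dimension.

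The key geometric input is to identify the resulting isomorphism of associated graded algebras $\bC[\fk{h}\oplus\fk{h}^*]\rtimes \bC W \xrightarrow{\sim} \bC[(\fk{h}^*)^*\oplus \fk{h}^*]\rtimes \bC W$ with pullback along the linear, $W$-equivariant map $F:\fk{h}\oplus\fk{h}^* \to \fk{h}\oplus\fk{h}^*$ given by $(y,x)\mapsto (-y,x)$, using the canonical identification $(\fk{h}^*)^*=\fk{h}$. Being linear and $W$-equivariant, $F$ preserves pointwise stabilizers and therefore permutes the stabilizer strata of $\fk{h}\oplus\fk{h}^*$; it thus induces an automorphism $\bar F$ of the quotient $(\fk{h}\oplus\fk{h}^*)/W$ that preserves each symplectic leaf as a set. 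A direct computation against the standard symplectic form $\omega$ yields $F^*\omega=-\omega$, so $F$ is an antisymplectomorphism and in particular sends isotropic subvarieties to isotropic subvarieties. Transporting a good filtration through the Fourier isomorphism then gives $V_a(\cal{F}_\fk{h}(M))=\bar F(V_a(M))$, and $\bar F$ carries the smooth locus of $V_a(M)\cap \cal{L}$ onto that of $V_a(\cal{F}_\fk{h}(M))\cap \cal{L}$, which remains isotropic. Hence $\cal{F}_\fk{h}(M)$ is holonomic with respect to the Bernstein filtration, and so holonomic.

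There is no substantial technical obstacle; only bookkeeping to check that the Poisson structures match up (which is automatic once one knows that $\gr$ of a filtered algebra isomorphism is an isomorphism of Poisson algebras, so reduces to comparing the leading terms of the two relations $[y,x]$ under the substitution $y\mapsto -y$, $x\mapsto x$). The one feature of the argument deserving note is that it relies essentially on the Bernstein filtration: the geometric filtration is not preserved by $\cal{F}_\fk{h}$, which effectively swaps ``position'' and ``momentum'', so a direct comparison of geometric singular supports $V(M)$ is unavailable, and one must invoke the equivalence of Bernstein- and geometric-holonomicity already established in the paper.
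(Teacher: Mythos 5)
Your proof is correct and follows essentially the same route as the paper: the paper's (one-line) argument is precisely that $\cal{F}_{\fk{h}}$ preserves the arithmetic singular support under the induced isomorphism of Poisson varieties $(\fk{h}\oplus\fk{h}^*)/W \simeq (\fk{h}^*\oplus\fk{h})/W$ (the factor swap, which is your map $(y,x)\mapsto(-y,x)$ up to the canonical identifications and a harmless sign), which preserves the symplectic leaves and takes isotropic subvarieties to isotropic subvarieties. You have simply spelled out the bookkeeping, including the correct observation that one works with the Bernstein filtration and then uses the paper's equivalence of the two notions of holonomicity.
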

\begin{proof}
  This is clear since $\cal{F}$ preserves the arithmetic singular support under the isomorphism of Poisson varieties $(\fk{h}\oplus \fk{h}^*)/W \simeq (\fk{h}^*\oplus \fk{h})/W$ given by switching the two factors.
\end{proof}

\subsection{Kashiwara theorem}

Let $Y$ be a smooth variety with the action of a finite group $W$ and parameters $c, \omega$, and let $X\subset Y$ be a smooth, closed $W$-invariant subvariety.  Write $\phi:X\to Y$ for the inclusion.  For $y\in Y$, let $W_y$ be the stabilizer of $y$ in $W$; then $W_y$ acts linearly on the tangent space $T_y Y$.  Let $c_y$ be the function on the set of conjugacy classes of complex reflections in $W_y$ defined by $c_y(w) = c(w,Z_w)$, where $Z_w$ is the component of $Y^w$ which passes through $y$.



\begin{prop} \label{prop:kash}
Let $\phi:X\to Y$ be the inclusion of a $W$-stable smooth closed subvariety such that if $x$ is a generic point of any component of $X$, then $c_{\phi(x)}$ is a regular parameter for the action of $W_x$ on $T_{\phi(x)}Y$. The functor $\phi_0$ induces an equivalence of abelian categories between the category of sheaves of $H_{\phi^* c,\phi^* \omega}(W,X)$-modules and the category $H_{c,\omega}(W,Y)-\mathrm{mod}_X$ of sheaves of $H_{c,\omega}(W,Y)$-modules which are set-theoretically supported on $X$.
\end{prop}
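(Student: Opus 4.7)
The plan is to follow the classical pattern of Kashiwara's theorem for $\cal{D}$-modules: exhibit a candidate inverse to $\phi_0$, verify formal adjunction, and then prove that the unit and counit are isomorphisms by reducing to a linear model at a generic point of $X$, where the regularity of $c_{\phi(x)}$ produces the desired structure.

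First I would define the candidate inverse
$$\phi^\dagger: H_{c,\omega}(W,Y)\text{-}\mathrm{mod}_X \to H_{\phi^* c,\phi^* \omega}(W,X)\text{-}\mathrm{mod}$$
by $\phi^\dagger M = \cal{H}om_{\phi^{-1}\cal{O}_Y}(\cal{O}_X,\phi^{-1}M)$, the subsheaf of $\cal{I}_X$-annihilated sections. A calculation in local coordinates adapted to $X\subset Y$ should show that $\phi^\dagger M$ carries a natural $H_{\phi^* c,\phi^* \omega}(W,X)$-module structure: functions and reflections in $S'(X)$ act by restriction, while a tangent field on $X$ acts via any lift to a Dunkl operator on $Y$, the ambiguity lying in $\cal{I}_X\cdot(\cal{D}^\omega_X\rtimes\bC W)$ which annihilates $\cal{I}_X$-torsion sections. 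I would then observe that $\phi_0$ is exact on the relevant subcategory (being tensor product with the flat bimodule $T_{Y\leftarrow X}$ followed by pushforward along an affine map), and establish formally the adjunction $(\phi_0,\phi^\dagger)$ with its unit and counit.

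The heart of the argument is to check that the unit $\mathrm{id}\to \phi^\dagger\phi_0$ and counit $\phi_0\phi^\dagger\to\mathrm{id}$ are isomorphisms. Both statements are local on $Y$ and can be verified after completion along the $W$-orbit of a generic point $x$ of a component of $X$. Using the slice theorem for sheaves of Cherednik algebras on a smooth variety (Bezrukavnikov--Etingof-style completion in the spirit of the techniques of \cite{BM}), this completion is Morita equivalent to an induced completion of a tensor product
$$H_{c_x}(W_x,T_{\phi(x)}Y)\otimes \widehat{\cal{D}}^{\omega'}(X),$$
where $\omega'$ is the induced twisting. Because $x$ is a generic point of a component of $X$, the stabilizer $W_x$ acts trivially on a neighborhood of $x$ in $X$, so the decomposition is a genuine tensor product and the problem reduces to Kashiwara for the linear inclusion $T_xX\hookrightarrow T_{\phi(x)}Y$, where all reflections of $W_x$ act in the normal direction.

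The $\cal{D}$-theoretic factor along $X$ is handled by the classical Kashiwara theorem. On the transverse side, one is reduced to the rational Cherednik algebra $H_{c_x}(W_x,N_x)$ for the normal space $N_x$ at a regular parameter, for the inclusion $\{0\}\hookrightarrow N_x$. For regular $c_x$, any finitely generated $H_{c_x}(W_x,N_x)$-module set-theoretically supported at $\{0\}\subset N_x$ has the $\mathfrak{h}^*$-variables acting locally nilpotently, which places it in the ``opposite-orientation'' category $\mathcal{O}_{c_x}$; this category is semisimple with simples $\nabla(\tau)$ indexed by $\mathrm{Irr}(W_x)$, and $\phi_0$ sends $\tau\in\mathrm{Rep}(W_x)$ to $\nabla(\tau)$, giving the inverse equivalence on the nose. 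The main obstacle is the reduction step itself: invoking the sheaf-of-Cherednik-algebras slice theorem in sufficient generality for nonlinear $X$, and descending the resulting equivalence back to quasicoherent $H_{c,\omega}(W,Y)$-modules. Once the slice theorem is available, the proposition assembles formally from classical Kashiwara and the semisimplicity of category $\mathcal{O}$ at regular parameters.
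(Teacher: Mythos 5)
Your overall strategy --- define $\phi^!M=\Hom_{\phi^{-1}\cal{O}_Y}(\cal{O}_X,\phi^{-1}M)$, verify the adjunction with $\phi_0$ formally, pass to formal neighborhoods via the Bezrukavnikov--Etingof isomorphism, reduce to a linear inclusion, and finish using semisimplicity of the Fourier dual of category $\cal{O}$ at regular parameters --- is essentially the paper's. But there is a genuine gap in your localization step: you propose to check that the unit and counit are isomorphisms only after completion at a \emph{generic} point of each component of $X$. That is not sufficient. A morphism of sheaves is an isomorphism only if it is one at every point, and the objects of $H_{c,\omega}(W,Y)-\mathrm{mod}_X$ for which the statement has the most content are exactly those supported on proper closed subsets of $X$ (for instance a single special point with strictly larger stabilizer); completion at a generic point of $X$ kills such a module outright, so your check says nothing about the counit on it. The paper verifies the adjunction maps on formal neighborhoods of \emph{every} point of $X/W$.

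Working at every point forces two further issues you have elided. First, at a non-generic point $x$ the stabilizer $W_x$ acts nontrivially on $T_xX$, so the local model is not ``classical Kashiwara along $X$ tensored with a transverse Cherednik factor''; it is the inclusion of the $W_x$-stable subspace $T_xX\subset T_{\phi(x)}Y$, and one uses the factorization $H_{c_{\phi(x)}}(W_x,\fk{h})=H_{c_{\phi(x)}}(W_x,\fk{h}/X)\otimes H_{c_{\phi(x)}}(W_x,X)$ (valid because the nontrivial eigenvector of each reflection lies in one of the two summands) to reduce to the origin in the normal space. Second, and more importantly, the hypothesis only provides regularity of $c_{\phi(x)}$ at generic $x$; to run the argument at an arbitrary point you need that this forces regularity of $c_{\phi(x)}$ for \emph{all} closed points $x$ of $X$ (restriction of a regular parameter to a parabolic stabilizer stays regular). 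The paper states and uses exactly this implication; without it your argument does not cover modules supported at special points of $X$. The remainder of your outline --- the semisimplicity of $\cal{O}^-$, together with passing to direct limits to handle modules that are merely quasicoherent rather than finitely generated --- matches the paper.
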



\begin{proof}
  We define a functor $$\phi^{!}: H_{c,\omega}(W,Y)-\mathrm{mod} \to H_{\phi^* c,\phi^* \omega}(W,X)-\mathrm{mod}$$ given by $$M\mapsto \Hom_{\phi^{-1}\cal{O}_Y}(\cal{O}_X,\phi^{-1} M ) = \Hom_{\phi^{-1} H_{c,\omega}(W,Y)}(T_{Y \leftarrow X},\phi^{-1} M).$$  Just as in \cite[Proposition 1.5.25]{HTT}, $\phi^{!}$ is right adjoint to $\phi_0$.  Thus we have only to show that the adjunction morphisms
  $$\mathrm{Id} \to \phi^! \phi_0, \qquad \phi_0 \phi^! \to \mathrm{Id}$$
  are natural isomorphisms.  To do so, it will suffice to check that these are isomorphisms when restricted to formal neighborhoods around every point of $X/W$.  The functors $\phi_0, \phi^!$ are straightforward to define for formal schemes, and doing so they commute with restriction.  Thus using the fact that $W$ acts linearly on formal neighborhoods, along with the Bezrukavnikov-Etingof isomorphism \cite[Theorem 3.2]{BE}, we may replace $\phi$ with the inclusion $T_x X\to T_{\phi(x)}Y$, and replace $W$ with the stabilizer $W_x$.

  We have reduced to the case when $Y=\fk{h}$ is a vector space with a linear action of $W$ and $\phi$ is the inclusion of a $W$-stable subspace $X$.  Since $H_{c_{\phi(x)}}(W,\fk{h}) = H_{c_{\phi(x)}}(W,\fk{h}/X) \otimes H_{c_{\phi(x)}}(W,X)$, we may reduce further to the case that $X$ is the origin of $\fk{h}$.

  Let $\cal{O}^-$ be the Fourier dual of the ordinary category $\cal{O}_{c_{\phi(x)}}$ studied in \cite{GGOR}.  If $M\in H_{c_{\phi(x)}}(W,\fk{h})-\mathrm{mod}_{\{0\}}$, then by the PBW theorem, $M$ is finitely generated over $H_{c_{\phi(x)}}$ if and only if $M$ is in category $\cal{O}^-$, that is, $M$ is finitely generated over the subalgebra $\bC[y_1, \ldots, y_n]$ of $H_{c_{\phi(x)}}$ generated by the Dunkl operators and the $x_i$ act locally nilpotently on $M$.  Now we have assumed that $c_{\phi(x)}$ is a regular for a generic point of each component of $X$, which implies that $c_{\phi(x)}$ is regular for all closed points $x$ of $X$.  The theorem follows, since category $\cal{O}^-$ is semisimple and any $H_{c_{\phi(x)}}(W,\fk{h})$-module supported at the origin is a direct limit of modules in category $\cal{O}^-$.
\end{proof}

As we noted, the theorem holds in particular if $X$ is transverse to each $Z$ with $(w,Z)\in S_c(Y)$ for some $w\in W$, since each $c_{\phi(x)}=0$ in this case.  Indeed, the above transversality condition is satisfied if and only if no component of $X$ is contained in any $Z$ with $(w,Z)\in S_c(Y)$ for some $w\in W$.

It is easy to see that the functors $\phi_0$ and $\phi^!$ preserve the full subcategories of $H_c$-coherent modules, so the equivalence restricts to these categories as well.




\begin{prop} \label{prop:pfhol} Let $\phi:X \to Y$ be as in Proposition \ref{prop:kash}.  The functor $$\phi_{0}:H_{\phi^* c,\phi^* \omega}(W,X)-\mathrm{mod} \to H_{c,\omega}(W,Y)-\mathrm{mod}_X$$ and its inverse $\phi^{!}$ preserve holonomicity.
\end{prop}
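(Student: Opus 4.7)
The plan is to exploit Proposition~\ref{prop:kash}: since $\phi_0$ is an exact equivalence of categories with quasi-inverse $\phi^!$, it suffices to show that for every coherent $H_{\phi^*c,\phi^*\omega}(W,X)$-module $N$, $N$ is holonomic if and only if $\phi_0 N$ is holonomic. Both directions will follow from a clean identification of singular supports.

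I would equip the pushforward bimodule $T_{Y\leftarrow X}$ with the good filtration induced by the Dunkl filtrations on both sides (the line bundle twists by $K_X$ and $\phi^{-1}K_Y^{-1}$ appearing in its definition are $\cal{O}$-rank one and contribute no associated graded information). The key identification is
$$\gr T_{Y\leftarrow X} \simeq p_*(\cal{O}_{T^*Y|_X}\rtimes \bC W),$$
viewed as a bimodule over $\gr H_{c,\omega}(W,Y) \simeq p_*(\cal{O}_{T^*Y}\rtimes\bC W)$ and $\gr H_{\phi^*c,\phi^*\omega}(W,X) \simeq p_*(\cal{O}_{T^*X}\rtimes\bC W)$. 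Combined with flatness of $\cal{O}_{T^*Y|_X}$ over $\cal{O}_{T^*X}$ (the projection $T^*Y|_X \to T^*X$ is a vector bundle with fiber $N^*_{X/Y}$), this yields, for any good filtration on $N$, that $\gr\phi_0 N \simeq \cal{O}_{T^*Y|_X}\otimes_{\cal{O}_{T^*X}} \gr N$. Letting $V'(\cdot)$ denote the preimage in the full cotangent bundle of the singular support $V(\cdot)\subset T^*(\cdot)/W$, this gives the set-theoretic identity
$$V'(\phi_0 N) = T^*Y|_X \times_{T^*X} V'(N).$$

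The remaining step is symplectic geometry. The subvariety $T^*Y|_X \subset T^*Y$ is coisotropic: the restriction of the symplectic form of $T^*Y$ to $T^*Y|_X$ is the pullback of the symplectic form of $T^*X$ via the projection $T^*Y|_X\to T^*X$, with null foliation given by $N^*_{X/Y}$. Since $V'(\phi_0 N)$ is the preimage of $V'(N)$ under this coisotropic reduction, $V'(\phi_0 N)$ is isotropic in $T^*Y$ if and only if $V'(N)$ is isotropic in $T^*X$; moreover $\dim V(\phi_0 N) = \dim V(N) + (\dim Y - \dim X)$, so $V'(\phi_0 N)$ has the half-dimensional size appropriate to holonomicity on $Y$ exactly when $V'(N)$ does on $X$. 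By \cite[Proposition 1.3.30]{CG} (as already used in the proof of Proposition~\ref{prop:gkdim}), isotropy of $V'$ in the ambient cotangent space is equivalent to isotropy of its intersections with the preimages of the symplectic leaves of the respective quotient. Together these imply $\phi_0 N$ is holonomic if and only if $N$ is; applied with $M = \phi_0 N$ and $N = \phi^! M$, this also gives the statement for $\phi^!$.

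The main obstacle is the identification $\gr T_{Y\leftarrow X} \simeq p_*(\cal{O}_{T^*Y|_X}\rtimes \bC W)$, i.e.\ checking that the natural filtration on the pushforward bimodule is good and computes this associated graded. I would verify this directly from the local description of Dunkl operators and the formulas in Section~\ref{sec:lr}, falling back if needed to the formal-neighborhood reduction of the proof of Proposition~\ref{prop:kash}: after Bezrukavnikov-Etingof and a $W$-equivariant splitting $\fk{h} = X\oplus Y'$ (with $c$ regular on $Y'$), $T_{Y\leftarrow X}$ becomes the external tensor product of $H_{\phi^*c}(W,X)$ with a ``baby co-Verma'' module for $H_c(W,Y')$ supported at the origin, for which the associated graded computation is transparent.
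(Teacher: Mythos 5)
Your proposal follows essentially the same route as the paper: the paper's proof consists precisely of the identity $V'(\phi_0 M) = \varpi\rho^{-1}(V'(M))$ for the canonical maps $\varpi: X\times_Y T^*Y \to T^*Y$ and $\rho: X\times_Y T^*Y\to T^*X$, the observation that this preserves and reflects isotropy, and the appeal to \cite[Proposition 1.3.30]{CG}. You simply supply the standard justifications (the $\gr T_{Y\leftarrow X}$ computation and the coisotropic-reduction argument) that the paper leaves implicit, so the argument is correct and not genuinely different.
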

\begin{proof}
Write $V'(M)$ to refer to the preimage of $V(M)$ under $T^* X \to T^* X / W$.  Let $\varpi: X \times_Y T^* Y \to T^* Y$ and $\rho: X \times_Y T^* Y \to T^* X$ be the canonoical maps.  Then we have $V'(\phi_0 M) = \varpi \rho^{-1} (V'(M))$.  Hence $V'(\phi_0 M)$ is isotropic if and only if $V'(M)$ is.  This suffices, for $V'(M)$ is isotropic if and only if $M$ is holonomic by \cite[Proposition 1.3.30]{CG}.
\end{proof}

\begin{rmk}
  Finally, suppose that $\phi:X\to Y$ is \emph{any} inclusion of a $W$-stable smooth closed subvariety.  If we have that $N \in H_{c,\omega}(W,Y)-\mathrm{mod}_X$ is a quotient of $\phi_0 M$ for some holonomic $H_{\phi^* c,\phi^* \omega}(W,X)$-module $M$, then \cite[Proposition 1.3.30]{CG} and the proof of Proposition \ref{prop:pfhol} immediately show that $\phi^{!} N$ is holonomic.  We shall later use this to show that $\phi^{!}$ preserves holonomicity in this generality.
\end{rmk}

\subsection{Verdier dual}
Notation: if $A^\bullet$ is a complex, we define the shift functor $A[i]^\bullet$ by $A[i]^j = A^{i+j}$.

In this section we return to the setting when $H_c = H_c(W,\fk{h})$ is a rational Cherednik algebra and $\dim \fk{h} = r$ (one can define the duality functor in the global situation as well).  Let $D^b_f(H_c)$ denote the bounded derived category of finitely generated $H_c$-modules.

We have a duality functor on the derived category of Cherednik modules given by $\bD(\cdot) = \Hom_{D^b_f(H_c)}(\cdot,H_c)[r]$.  We have $\bD(\bD(\cdot)) \simeq \mathrm{Id}$, see \cite[Proposition D.4.1]{HTT}, so $\bD$ is an antiequivalence of $D^b_f(H_c)$ with $D^b_f(H_{\overline{c}})$ (here we tacitly use the equivalence between right $H_c$ and left $H_{\overline{c}}$ modules discussed in section \ref{sec:lr}).  We have already seen that in the case of generic parameters that $\bD$ takes holonomic modules concentrated in degree zero to holonomic modules in degree 0.  Moreover, for arbitrary parameters, \cite[Proposition 4.10]{GGOR} shows that $\bD$ preserves the bounded derived category of $\cal{O}$, namely
$$\bD: D^b(\cal{O}_c) \to D^b(\cal{O}_{\overline{c}})^\mathrm{op}$$
is an equivalence.

For arbitrary parameters, if $M$ is holonomic, concentrated in degree 0, then Proposition \ref{prop:exts} says that $H^i (\bD(M)) = 0$ if $i<j(M) - r$ or $i>r$ (the second inequality is because $H_c$ has homological dimension at most $2r$).  However, if $M$ is a simple holonomic module, it is not necessarily the case that $\bD(M)$ has cohomology concentrated in a single degree.  Here is an example, pointed out to me by Etingof and Bellamy.

\begin{eg}
Take $W=S_3$, $\fk{h}$ the reflection representation so $\dim \fk{h}=2$, and $c=1/3$.  Let $M$ be the augmentation ideal of the polynomial representation (i.e., the radical of the Verma module for the trivial representation of $W$) of $H_c$.  Then $\bD(M)$ has nonzero cohomology in multiple degrees.  To see this, observe that $M$ is not Cohen-Macaulay over $\bC[\fk{h}]$, and \cite[Corollary 3.3]{EGL} implies that a simple module in category $\cal{O}$ has dual concentrated in single degree if and only if it is Cohen-Macaulay over $\bC[\fk{h}]$.
\end{eg}

However, we do have the following result.

\begin{lem}
  If $M$ is a holonomic $H_c$-module, then $H^i(\bD(M))$ is holonomic for each $i$.
\end{lem}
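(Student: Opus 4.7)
The plan is to reduce the lemma to the geometric statement $V_a(H^i\bD(M)) \subseteq V_a(M)$ as subvarieties of $(\fk{h}\oplus\fk{h}^*)/W$. Once this containment is in hand, holonomicity of $H^i\bD(M)$ will follow quickly: since $M$ is holonomic, the preimage $V'_a(M) \subset \fk{h}\oplus\fk{h}^*$ is isotropic in the ambient symplectic vector space by \cite[Proposition 1.3.30]{CG}; the corresponding preimage of $V_a(H^i\bD(M))$ is then a closed subvariety of $V'_a(M)$, hence itself isotropic (the isotropy condition is inherited by closed subvarieties, as one checks at smooth points of each irreducible component); and applying \cite[Proposition 1.3.30]{CG} again gives holonomicity.

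For the containment of singular supports I would mimic the classical $\cal{D}$-module argument. Choose a filtered free resolution $P^\bullet \to M$ over $H_c$ compatible with a good filtration on $M$. Taking associated graded produces a finite free resolution of $\gr M$ over $\gr H_c = \bC[\fk{h}\oplus\fk{h}^*]\rtimes \bC W$. Applying $\Hom_{H_c}(-,H_c)$ to $P^\bullet$ and passing to associated graded yields a spectral sequence with $E_1$-page $\Ext^{\ast}_{\gr H_c}(\gr M,\gr H_c)$ converging to $\gr \Ext^{\ast}_{H_c}(M,H_c)$. Hence $\gr H^i\bD(M) = \gr \Ext^{i+r}_{H_c}(M, H_c)$ is a subquotient of the corresponding $E_1$ term, and in particular its support in $(\fk{h}\oplus\fk{h}^*)/W$ is contained in the support of that $E_1$ term. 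Using the isomorphism $\Ext^{\ast}_{\gr H_c}(N,\gr H_c) \simeq \Ext^{\ast}_{\bC[\fk{h}\oplus\fk{h}^*]}(N,\bC[\fk{h}\oplus\fk{h}^*])$ from the proof of Proposition \ref{prop:exts}, together with the standard fact that $\Ext$ over a commutative Noetherian ring is supported inside the support of the source module, this support is contained in $V_a(M)$.

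The main obstacle is simply verifying that the filtered-resolution spectral sequence, which is standard in the $\cal{D}$-module setting (cf.\ \cite[Appendix D]{HTT}), carries over unchanged to $H_c$; this is a consequence of the PBW theorem identifying $\gr H_c$ as $\bC[\fk{h}\oplus\fk{h}^*]\rtimes \bC W$, so that filtered resolutions of $H_c$-modules give genuine resolutions at the graded level. Everything else — the commutative $\Ext$ support statement, the stability of isotropy under passage to closed subvarieties, and the Chriss--Ginzburg translation between isotropy in the leaves and isotropy of $V'_a$ — is elementary and has already been deployed in the proofs of Propositions \ref{prop:gkdim} and \ref{prop:pfhol}.
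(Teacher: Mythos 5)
Your proposal is correct and takes essentially the same route as the paper: the key containment $V(H^i(\bD(M)))\subseteq V(M)$, which you re-derive via the filtered free resolution and the associated spectral sequence, is exactly what the paper obtains by citing \cite[Proposition D.4.2]{HTT}, and both arguments then conclude by observing that isotropy is inherited by closed subvarieties of the singular support.
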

\begin{proof}
  We have $V(M) = V(\bD(M))$, and the latter, by definition, is just $\bigcup_i V(H^i(\bD(M)))$ (see \cite[Proposition D.4.2]{HTT}).  Since $V(H^i(\bD(M))) \subset V(M)$ and $V(M)$ is isotropic in every symplectic leaf, so also is $V(H^i(\bD(M)))$.  This gives the result.
\end{proof}
\subsection{An adjunction}
Given an affine morphism $\phi$ which is either melys or a closed embedding, we write $\phi_\bullet = L \phi_0, \phi^\bullet = L \phi^0$.  These are functors on the respective derived categories of Cherednik modules.  If $j:U\to X$ is an open embedding of affine varieties, however, $j_0$ and $j^0$ are exact.  Write $D^b(H_{c,\omega})$ to denote the bounded derived category of $H_{c,\omega}(W,X)$-modules.

\begin{lem}
  Let $j:U\to X$ be a $W$-equivariant affine open embedding, $X$ affine.  In the derived category of Cherednik modules, $j_\bullet$ is right adjoint to $j^\bullet$.
\end{lem}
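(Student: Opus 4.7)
The plan is to reduce the lemma to a standard tensor--Hom adjunction at the affine algebra level, and then use exactness of both functors to make the passage to the bounded derived category essentially formal.

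Since $X$ (and hence $U$) is affine, taking global sections gives an equivalence between sheaves of $H_{c,\omega}(W,X)$-modules and left modules over $A:=\Gamma(X,H_{c,\omega}(W,X))$, and likewise for $U$ with $B:=\Gamma(U,H_{j^*c,j^*\omega}(W,U))$. The identity $H_{c,\omega}(W,X)|_U=H_{j^*c,j^*\omega}(W,U)$ of sheaves of algebras, combined with the PBW description of the Cherednik algebra as a quasicoherent $\cal{O}_X$-module and with the affineness of $j$, gives an identification $B\simeq A\otimes_{\cal{O}(X)}\cal{O}(U)$ realizing $B$ as a flat localization of $A$. Under these identifications, $j^0$ becomes extension of scalars $N\mapsto B\otimes_A N$, while $j_0$, as explicitly noted in the paper, is restriction of scalars along $A\to B$ (the canonical bundle twist in the definition of $T_{Y\leftarrow X}$ disappears for an open embedding, since $K_X|_U\simeq K_U$).

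At the abelian level, the adjunction $j^0\dashv j_0$ is then the classical tensor--Hom isomorphism $\Hom_B(B\otimes_A N,M)\simeq \Hom_A(N,M)$. To lift it to $D^b$, I would use that both $j^0$ and $j_0$ are exact (the former by flatness, the latter because restriction of scalars is always exact), so that $j^\bullet$ and $j_\bullet$ may be computed termwise on complexes and preserve $D^b$. Moreover, as a right adjoint of an exact functor, $j_0$ preserves injectives. Choosing a bounded-below injective resolution $M^\bullet\to I^\bullet$ over $B$, the derived adjunction then follows from
$$\Hom_{D^b(H_{c,\omega})}(N^\bullet, j_\bullet M^\bullet) = H^0 \Hom^\bullet_A(N^\bullet, j_0 I^\bullet) \simeq H^0 \Hom^\bullet_B(j^0 N^\bullet, I^\bullet) = \Hom_{D^b(H_{j^*c,j^*\omega})}(j^\bullet N^\bullet, M^\bullet),$$
the middle step being the abelian adjunction applied in each degree.

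The only non-formal ingredient is the identification of $B$ as a flat localization of $A$ together with the resulting simple descriptions of $j^0$ and $j_0$ as extension and restriction of scalars. This is really a statement that Etingof's construction of $H_{c,\omega}$ via Dunkl operators inside $j'_*j'^{-1}(\cal{D}_X^\omega\rtimes\bC W)$ (for $j'$ the inclusion of the complement of the reflection divisor) is compatible with further $W$-equivariant open affine restriction, and that the PBW filtration makes $H_{c,\omega}$ quasicoherent over $\cal{O}_X$. Once this local-to-global identification is verified, the rest of the proof is standard homological algebra.
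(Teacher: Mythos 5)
Your proof is correct and follows essentially the same route as the paper's: identify $j_0$ with restriction of scalars and $j^0$ with extension of scalars along the flat inclusion $H_{c,\omega}(W,X)\subset H_{j^*c,j^*\omega}(W,U)$, apply the tensor--Hom adjunction, and use exactness of both functors to pass to the derived category. The extra detail you supply (the global-sections reduction, the disappearance of the $K_X$-twist, and the injective-resolution bookkeeping) is consistent with, and slightly more explicit than, the paper's argument.
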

\begin{proof}
If $N$ is an $H_{j^* c,j^* \omega}(W,U)$-module then $j_0 N$ is simply the restriction of $N$ to $H_{c,\omega}(W,X) \subset H_{j^* c,j^* \omega}(W,U)$.  Moverover, if $M$ is an $H_{c,\omega}(W,X)$-module, then $j^0 M = H_{j^* c,j^* \omega}(W,U) \otimes_{H_{c,\omega}(W,X)} M$.  Note that $H_{j^* c,j^* \omega}(W,U)$ is flat over $H_{c,\omega}(W,X)$, so we have $$\Ext^i_{H_{c,\omega}(W,X)}(M,j_0 N) = \Ext^i_{H_{j^* c,j^* \omega}(W,U)}(j^0 M, N).$$  Since the functors $j_0$ and $j^0$ are exact in this case, this proves the lemma.
\end{proof}

\section{Preservation of holonomicity}
\subsection{Generic parameters}

In this section, $H_c = H_c(W,\fk{h})$ is a rational Cherednik algebra.  The following results we prove in the case when the parameter $c$ is regular.  Let $U\subset \fk{h}$ be a $W$-stable, affine open subvariety.  Let $j:U\to \fk{h}$ be the inclusion, and $r=\dim \fk{h}$.

\begin{thm} \label{thm:hol} If $M$ is a holonomic $H_c(W,U)$-module then $j_{0}(M)$ is holonomic.
\end{thm}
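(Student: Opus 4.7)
The plan is to deduce the conclusion from Proposition \ref{prop:gkdim}, which at regular $c$ reduces holonomicity to the equality $\GK(j_0 M) = r$, and which in turn requires $j_0 M$ to be finitely generated over $H_c(W,\fk{h})$.  Since $j_0$ is exact for open affine inclusions between affine varieties, and every holonomic module has finite length by part (1) of Theorem \ref{thm:loshol}, I first reduce to the case that $M$ is a simple holonomic $H_c(W,U)$-module.

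Let $f \in \bC[\fk{h}]^W$ be a $W$-invariant polynomial cutting out $\fk{h}\setminus U$, so that $H_c(W,U) \simeq H_c(W,\fk{h})[f^{-1}]$.  Picking a finite generating set $\{m_1,\ldots,m_\ell\}$ of $M$ over $H_c(W,U)$, let $N_0\subset j_0 M$ denote the $H_c(W,\fk{h})$-submodule it generates; then $j_0 M = \bigcup_k f^{-k} N_0$, and finite generation is equivalent to the stabilization of the ascending chain $\{f^{-k}N_0\}_k$.

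The main step, and the principal technical obstacle, is to adapt the Bernstein--Sato polynomial argument from $\cal{D}$-module theory: I would construct a nonzero $b(s)\in \bC[s]$ together with elements $P_i(s)\in H_c(W,\fk{h})[s]$ satisfying
\[ P_i(s)\cdot f^{s+1}\cdot m_i = b(s)\cdot f^s\cdot m_i\]
inside the $H_c(W,\fk{h})[s]$-module $M[s,f^{-1}]\cdot f^s$.  The classical proof, which applies a Bernstein-type inequality to the module generated by the $f^s m_i$ viewed over a suitably filtered algebra, should carry over, using crucially that $H_c$ is simple at regular $c$ and that Proposition \ref{prop:gkdim} governs holonomicity intrinsically.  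Specializing $s = -k$ for $k$ large enough to avoid the negative integer roots of $b$ yields $f^{-k-1} m_i \in H_c(W,\fk{h})\cdot f^{-k} N_0$, forcing the chain to stabilize; hence $j_0 M$ is finitely generated.  The same $b$-function lets one build a good filtration on $j_0 M$ whose associated graded has support equal to $\overline{V_a(M)} \subset T^*\fk{h}/W$.  Since $V_a(M)$ has dimension $r$ by Proposition \ref{prop:gkdim}, so does its closure, so $\GK(j_0 M) = r$ and $j_0 M$ is holonomic.
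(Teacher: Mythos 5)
Your strategy---prove that $j_0M$ is finitely generated over $H_c(W,\fk{h})$ via a Bernstein--Sato functional equation and then conclude by a GK-dimension count---is a recognizable alternative to the paper's argument, but it has a genuine gap at its core. The existence of $b(s)$ and $P_i(s)$ with $P_i(s)f^{s+1}m_i=b(s)f^sm_i$ is exactly the hard content of the theorem, and ``the classical proof should carry over'' does not discharge it. In the $\cal{D}$-module setting the functional equation is obtained by first showing that $\bC(s)\otimes_{\bC[s]}M[s]f^s$ is holonomic, hence of finite length, over the Weyl algebra with scalars extended to $\bC(s)$, so that the descending chain of submodules generated by the $f^{s+k}m_i$ stabilizes. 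Transporting this to $H_c$ requires (a) Bernstein's inequality and the finite-length statement for modules over $H_c\otimes_\bC\bC(s)$, whereas Losev's results (Theorems \ref{thm:bernst} and \ref{thm:loshol}) are quoted only over $\bC$; and (b) a polynomial growth estimate for a filtration on $M[s]f^s$ compatible with the Bernstein filtration, which must somehow be extracted from the geometric singular support of $M$ over $H_c(W,U)$---an algebra on which the Bernstein filtration does not exist. Both points are precisely what the paper's factorization $j=\pi\circ i$ through the graph embedding $i:U\to\fk{h}\oplus\bC$, $v\mapsto(v,\delta(v)^{-1})$, is designed to handle: it moves the problem into the rational Cherednik algebra of a vector space, where Proposition \ref{prop:gkdim} and the Bernstein filtration are available, and it replaces the functional equation by the elementary estimate $\dim F_j(N/xN)\leq\dim F_j(N)-\dim F_{j-1}(N)$ after a Fourier transform. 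So your ``principal technical obstacle'' is not a routine adaptation; filling it in would essentially require rebuilding the paper's argument (or an equivalent amount of work) inside the proof of the functional equation.

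There are also secondary misapplications. The reduction to simple $M$ invokes Theorem \ref{thm:loshol}(1) for an $H_c(W,U)$-module, and the final step invokes $V_a(M)$ and Proposition \ref{prop:gkdim} for such a module; but the arithmetic singular support is undefined for $H_c(W,U)$ (there is no Bernstein filtration on it), and the extension of Losev's theorems to the global setting is explicitly deferred by the paper to a sequel. The reduction to simple $M$ is in any case unnecessary for a $b$-function argument. Finally, even granting the functional equation and hence finite generation of $j_0M$, the concluding claim that the $b$-function ``lets one build a good filtration on $j_0M$ whose associated graded has support $\overline{V_a(M)}$'' is asserted rather than proved; what is actually needed is a bound $\dim F_j(j_0M)\leq Cj^r$ for a Bernstein-good filtration, and producing it is again the same estimate the paper carries out explicitly.
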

\begin{proof} There is a $W$-invariant polynomial $\delta$ on $\fk{h}$ such that $U$ is the nonzero locus of $\delta$.  We factor $j=\pi \circ i$ where $i:U\to \fk{h}\oplus \bC$ is given by $i(v)=(v, \delta(v)^{-1})$, and $\pi:\fk{h}\oplus \bC \to \fk{h}$ is the projection.  Note, the action of $W$ on the second factor of $\fk{h}\oplus \bC$ is trivial.  Now $j_0 = \pi_0 \circ i_0$.  We know by Proposition \ref{prop:pfhol} that $i_0$ preserves holonomicity.

  We claim that $\cal{F}_{\fk{h}} \circ \pi_0 = (\pi^\vee)^0 \circ \cal{F}_{\fk{h}\oplus \bC}$ where $\pi^\vee: \fk{h}^* \to \fk{h}^*\oplus \bC$ is the dual map.  Indeed for any module over $H_c(W,\fk{h}\oplus \bC)$, it is easy to see that the images under the two functors agree (under the identification of left and right modules).  Thus we have only to show that $(\pi^\vee)^0$ preserves holonomicity, so suppose $N$ is a holonomic $H_c(W,\fk{h}^* \oplus \bC)$-module.  Let $F$ be a good filtration on $N$ with respect to the Bernstein filtration, so we have $\dim F_j(N) = c_{r+1} j^{r+1} + c_r j^r + \cdots + c_0$ for $j\gg 0$ and for some constants $c_i$.  Since $(\pi^\vee)^0$ is right exact, we may assume $N$ is irreducible.  Let $x:\fk{h}^*\oplus \bC \to \bC$ be the second projection.  We have $(\pi^\vee)^0 N = N/xN$.  Now $F$ induces a filtration on $N/xN$.  If we assume that multiplication by $x$ on $N$ is injective, we have $\dim F_j (N/xN) \leq \dim F_j(N) - \dim F_{j-1}(N) \leq c j^r$ for $j\gg 0$ where $c>(r+1) c_{r+1}$.  This module has GK-dim equal to $r$, hence $N/xN$ is holonomic by Proposition \ref{prop:gkdim} in this case.

  Finally, we must consider the case when multiplication by $x$ on $N$ is not injective, and so $N$ is set-theoretically supported on $\fk{h}^* \subset \fk{h}^*\oplus \bC$ since we have assumed $N$ to be irreducible.  Note that we have the decomposition $H_c(W,\fk{h}^* \oplus \bC) \simeq H_c(W,\fk{h}^*) \otimes \cal{D}(\bC)$.  The restriction of $N$ to $\cal{D}(\bC)$ is supported at $0\in \bC$.  Now the usual Kashiwara theorem $\cal{D}$-modules, applied to the inclusion $\{0\} \to \bC$, gives us that $N/xN = 0$.  Hence $(\pi^\vee)^0 N = 0$ is holonomic.
\end{proof}

\subsection{Rank 1 case, arbitrary parameters}
Let $\fk{h}=\bC$, $W=\bZ/ m\bZ$ the group of $m^\mathrm{th}$ roots of unity.  Choose $\lambda$ a primitive $m^\mathrm{th}$ root of unity, and write $W=\{s_i| i=1, \ldots, m\}$ so that $s_i$ acts on $\fk{h}$ by $\lambda^{-i}$.  In this case we have $\fk{h}^\mathrm{reg}=\bC^*$.  If $\cal{C}$ is the Serre subcategory of $H_c-\mathrm{mod}$ consisting of modules supported at the origin, then in this case $H_c-\mathrm{mod}/\cal{C}$ is naturally equivalent to the category of $H_c(W,\bC^*)$-modules, which, since $H_c(W,\bC^*)=\cal{D}(\bC^*) \rtimes \bC W$, this is just the category of $W$-equivariant $\cal{D}$-modules on $\bC^*$.  The quotient functor $H_c-\mathrm{mod} \to H_c-\mathrm{mod}/\cal{C}$ is given by localization, and the functor $j_0$ of extension by poles is a section, up to isomorphism.

If $M$ is a holonomoic $H_c(W,\bC^*)$-module, we can easily compute the singular support of $j_{0}(M)$, showing it is holonomic.  This approach has the added benefit that one is able to describe the subquotients of $j_{0}(M)$ and, in principle, compute its length.

We study the case when $M$ is an irreducible rank one $W$-equivariant De Rham local system on $\fk{h}^\mathrm{reg}$, so $M=\bC[x^{\pm 1}]$ as an $\bC[\fk{h}^\mathrm{reg}]$-module.  Then $j_0 M$ is a module over $H_c$ where $y$ acts by
$$ \partial_x + p(x) + \sum_{i=1}^{m-1} \frac{2c_i}{(1-\lambda^i)x} (1-s_i),$$
where $p(x) \in \bC[x^{\pm 1}]$ is such that $xp \in \bC[\fk{h}^\mathrm{reg}]^W$.

\begin{prop} \,
  \begin{enumerate}
  \item $j_0 M$ is reducible if and only if $xp$ has constant term divisible by $m$ and no terms of negative degree.
  \item If so, then $j_0 M$ has a unique irreducible submodule which has full support, and the quotient by which is in $\cal{O}^{-}$ (the Fourier dual of the ordinary category $\cal{O}$ studied in \cite{GGOR}).
  \end{enumerate}
\end{prop}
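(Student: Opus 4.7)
My approach is to work in the monomial basis $\{x^n\}_{n \in \bZ}$ of $j_0 M$ and classify submodules by their $W$-isotypic components. Writing $xp = \sum_{k \in \bZ} c_k x^{mk}$ (valid since $xp \in \bC[\fk{h}^{\mathrm{reg}}]^W = \bC[x^{\pm m}]$), a direct computation gives
\[
y \cdot x^n = (a_n + c_0)\,x^{n-1} + \sum_{k \neq 0} c_k\, x^{mk + n - 1}, \qquad
a_n := n + \sum_{i=1}^{m-1} \frac{2c_i(1 - \lambda^{in})}{1 - \lambda^i},
\]
together with $x \cdot x^n = x^{n+1}$ and $s_i \cdot x^n = \lambda^{in} x^n$. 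Note that $a_n$ depends on $n$ only modulo $m$ and $a_n = n$ when $n \in m\bZ$.

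Any $H_c$-submodule $N \subseteq j_0 M$ is $\bC W$-stable, hence is the direct sum of its $W$-isotypes; applying $x^m$ shows the $k^{\mathrm{th}}$ isotype of $N$ has the form $\mathrm{span}\{x^n : n \equiv k \pmod m,\ n \ge n_k\}$ for some $n_k \in \bZ \cup \{+\infty\}$, and the $x$-action yields the cyclic inequalities $n_{k+1} \le n_k + 1$. Thus either $N = 0$, $N = j_0 M$, or every $n_k$ is a finite integer.

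The core of the argument is a case split on whether $xp$ has terms of negative degree. If some $c_\ell \ne 0$ with $\ell < 0$, then $y \cdot x^{n_k}$ contains the term $c_\ell x^{m\ell + n_k - 1}$ of degree strictly less than $n_{k-1}$; iterating $y$ pushes the $n_k$ arbitrarily low, forcing $N = j_0 M$. In the complementary case $xp \in \bC[x^m]$, the only potentially escaping term in $y \cdot x^{n_k}$ is $(a_{n_k} + c_0)x^{n_k-1}$, and $y$-stability forces $a_{n_k} + c_0 = 0$ whenever $n_{k-1} \ge n_k$. Combined with the cyclic inequalities (whose gaps $n_{k-1} + 1 - n_k \ge 0$ sum to $m$), a short telescoping argument shows every proper nonzero submodule must have the form $N = \mathrm{span}\{x^n : n \ge n_0\}$ with $n_0 \in \bZ$ satisfying $a_{n_0} + c_0 = 0$. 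Since $a_n = n$ on $m\bZ$, a solution $n_0 \in m\bZ$ exists precisely when $c_0 \in m\bZ$, giving the ``if'' direction of (1). For the ``only if'' direction one exploits the freedom to rescale the generator of $M$ by a $W$-equivariant Laurent monomial (which shifts $c_0$ by $m\bZ$ without changing the isomorphism class of $M$) to reduce any integer root of $a_n + c_0 = 0$ to one lying in $m\bZ$.

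Part (2) follows directly from the explicit description. The submodule $N$ with $n_0 = -c_0 \in m\bZ$ visibly has full support on $\fk{h}^{\mathrm{reg}}$, and is the unique full-support submodule because any such must contain all sufficiently large monomials in every residue class and hence contain $N$; applying the same classification inside $N$ shows $N$ is itself irreducible. The quotient $j_0 M / N$ has basis $\{x^n : n < -c_0\}$, on which iterated $x$-multiplication eventually lands in $N$ and therefore vanishes, so $x$ acts locally nilpotently and the quotient lies in $\cal{O}^-$. The main technical obstacle will be the ``only if'' half of (1)---ruling out submodules arising from integer roots of $a_n + c_0 = 0$ in nonzero residue classes modulo $m$, which requires the careful normalization of the underlying rank-one $W$-equivariant local system $M$ mentioned above.
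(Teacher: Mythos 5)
Your formula for $y \cdot x^n$ is correct and the monomial bookkeeping is an appealing strategy, but as written the argument has gaps at its two load-bearing points. The first is the structural claim that each $W$-isotypic component of a submodule $N$ equals $\mathrm{span}\{x^n : n \equiv k,\ n \ge n_k\}$. This does not follow from ``applying $x^m$'': that only shows the component is a $\bC[x^m]$-submodule of a free rank-one $\bC[x^{\pm m}]$-module, and such submodules --- e.g.\ $(x^m-1)x^k\bC[x^m]$, or subspaces like $\bC(x^{k-m}+x^k)\oplus x^{k}\bC[x^m]$ --- need not be monomial spans. Everything downstream depends on this claim, so it must be proved, either by exploiting the $y$-action (for instance, when $xp$ has no negative terms the operator $xy$ acts on each isotype as a diagonal-plus-degree-raising operator with distinct diagonal entries $a_n+c_0$, which can be leveraged), or, as the paper does, by a different mechanism entirely: the paper computes the geometric singular cycle of $j_0M$ (the zero section plus the fiber over $0$, each with multiplicity one) and uses additivity of the cycle over short exact sequences, together with the simplicity of $M$ over $\cal{D}(\bC^*)\rtimes \bC W$, to show that every proper nonzero submodule is coherent over $\bC[\fk{h}]$ and has quotient supported at the origin. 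Some input of this kind is indispensable and is absent from your write-up.

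The second gap is the one you yourself flag: ruling out submodules coming from integer roots of $a_n + c_0 = 0$ in nonzero residue classes. Your proposed remedy cannot work: rescaling the generator by a $W$-equivariant Laurent monomial $x^{mj}$ shifts $c_0$ and every root by $mj$, hence preserves residue classes and cannot move a root into $m\bZ$. The issue is substantive, not cosmetic: for $m=2$ with the paper's formula for the $y$-action, taking $xp=-3$ and $c_1=1$ gives $y\cdot x = 1 - 3 + 2c_1 = 0$, so $x\bC[x]$ is a proper submodule even though the constant term of $xp$ is odd; for special $c$ these extra submodules genuinely occur, and any complete proof of the ``only if'' direction must confront them rather than defer them. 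Relatedly, in part (2) your assertion that the submodule with $n_0=-c_0$ is irreducible and is the unique full-support submodule fails in general: for $m=2$, $p=0$, $c_1=-1/2$, both $\bC[x]$ and $x\bC[x]$ are full-support submodules, so the unique \emph{irreducible} full-support submodule is strictly smaller than the one you exhibit. (The paper accommodates this by showing that any proper submodule of $\bC[x]$ has finite-dimensional quotient, so the quotient of $j_0 M$ by the minimal full-support submodule is an extension of a finite-dimensional module by $\bigoplus_{n<0}\bC x^n$ and still lies in $\cal{O}^-$.)
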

\begin{proof}  
  If $xp$ has constant term divisible by $m$ and no terms of negative degree, then we may change basis of $M$ to assume that $p\in x^{m-1} \bC[\fk{h}]^W$ in the following way.  If the constant term is $mk$ then we make the change of basis $x^i \mapsto x^{i+mk}$.  In this new basis, $y$ acts by
  $$ \partial_x - mk x^{-1} + p(x) + \sum_{i=1}^{m-1} \frac{2c_i}{(1-\lambda^i)x} (1-s_i).$$
  In this case it is clear that $\bC[x]\subset j_0 M$ is a proper submodule.

  Then for the second statement of the Proposition, let us call this submodule $N$.  If $N$ is irreducible then the second statement is clear.  If $N$ is not irreducible, then it has a finite-dimensional quotient.  Now again, the statement is clear since $\cal{O}^-$ contains all finite dimensional modules. 
  
  The converse of the first statement will follow from:
  \begin{lem}
    If $N \subset j_0 M$ is a proper submodule, then $N$ is coherent over $\bC[\fk{h}]$.
  \end{lem}
  \begin{proof}
    It is not hard to see that $M=\bC[x^{\pm 1}]$ is finitely generated over $H_c$, generated, for example, by $x^{-k}$ for large enough $k$.  So we may choose a good filtration on $M$ with respect to the geometric filtration on $H_c$: let $F_i = x^{-k-di} \bC[x]$, where $d$ is the lowest degree of $p$.  From this, we see that the preimage of the geometric singular support in $\fk{h} \times \fk{h}^*$ is the union of the zero section and the fiber over $0\in \fk{h}$, each having multiplicity 1.  Now since the singular cycle is additive over short exact sequences, and because $j_0 M$ has no submodule supported at $0\in \fk{h}$ or finite dimensional quotient, we see that the singular support of $N$ is just the zero section.  Hence $N$ is coherent over $\bC[\fk{h}]$.
  \end{proof}
  By the Lemma, if $j_0 M$ is reducible, then it has a submodule equal to $x^k \bC[x]$.  But this means $xp$ has constant term divisible by $m$ and no terms of negative degree.
\end{proof}

\printbibliography
\end{document}